\newtheorem{theorem}{Theorem}[section]
\newtheorem{proposition}[theorem]{Proposition}
\newtheorem{corollary}[theorem]{Corollary}
\newtheorem{remark}[theorem]{Remark}
\newtheorem{lemma}[theorem]{Lemma}
\theoremstyle{definition}
\newtheorem{definition}[theorem]{Definition}
\newcommand{\mD}{\mathcal D}
\begin{document}

\title[Maximal metrics for the first Steklov eigenvalue]{Maximal metrics for the first Steklov eigenvalue on surfaces}

\begin{abstract}
In recent years, eigenvalue optimization problems have received a lot of attention, in particular, due to their connection with the theory of minimal surfaces. In the present paper we prove that on any orientable surface there exists a smooth metric maximizing the first normalized Steklov eigenvalue. For surfaces of genus zero, this has been earlier proved by A.~Fraser and R.~Schoen. Our approach builds upon their ideas and further developments due to Petrides. As a corollary, we show that there exist free boundary branched minimal immersions of an arbitrary compact orientable surface with boundary into a Euclidean ball of some dimension. 
\end{abstract}

\author[Mikhail Karpukhin]{Mikhail Karpukhin}
\address{Department of Mathematics and Statistics, McGill University, Burnside Hall,
805 Sherbrooke Street West,
Montreal, Quebec,
Canada,
H3A 0B9}
\email{mikhail.karpukhin@mail.mcgill.ca}
\maketitle
\section{Introduction}
\subsection{Steklov problem on surfaces}
Let $(M,g)$ be a connected compact Riemannian surface with smooth boundary.
Steklov eigenvalues are defined as numbers $\sigma$ such that there exists a non-zero solution of the following system,
\begin{equation*}
\left\{
   \begin{array}{rcl}
	\Delta u &=& 0\quad \mathrm{on}\,\, M,\\
	\partial_nu &=& \sigma u\quad \mathrm{on}\,\, \partial M,
   \end{array}
\right.
\end{equation*}
where $n$ denotes the outward unit normal vector on $\partial M$.
Steklov eigenvalues coincide with the eigenvalues of the Dirichlet-to-Neumann operator $\mD_g\colon C^\infty(\partial M)\to C^\infty(\partial M)$ defined in the following way. For a function $u\in C^\infty(\partial M)$ let $\hat u$ be the harmonic extension of $u$ to the interior of $M$. Then one sets $\mD_g(u) = \partial_n\hat u$. Operator $\mD_g$ is an elliptic self-adjoint pseudodifferential operator of order $1$.
The spectrum of $\mD_g$ is discrete, its eigenvalues form a sequence
$$
0=\sigma_0(M,g) < \sigma_1(M,g)\leqslant\sigma_2(M,g)\leqslant\sigma_3(M,g)\leqslant\ldots,
$$
where eigenvalues are written with multiplicities. Spectral geometry of the Dirichlet-to-Neumann map has attracted a lot of attention in recent years, see the survey paper~\cite{GPsurvey} and references therein.

For any $k>0$ we introduce the normalized eigenvalue functionals
$$
\bar\sigma_k(M,g) = \sigma_k(M,g) L_g(\partial M),
$$
where $L_g(\partial M)$ stands for the length of the boundary. The functionals $\bar\sigma_k$ are scale-invariant, i.e. for any positive constant $t>0$ one has $\bar\sigma_k(M,tg) = \bar\sigma_k(M,g)$. In the present article we are concerned with extremal problems for the first non-trivial eigenvalue $\bar\sigma_1$. To this end, set
$$
\sigma^*(\gamma,k) =  \sup_g \bar\sigma_1(\Sigma_{\gamma,k},g),
$$
where $\Sigma_{\gamma,k}$ stands for an orientable surface of genus $\gamma$ with $k$ boundary components. The value $\sigma^*(\gamma,k)$ is known to be finite and bounded by a linear function of $\gamma$ and $k$, see~\cite{FS0, GPHPS,KSteklov}. The same is true for higher eigenvalues, see~\cite{KSteklov} for the best known bound.
The problem of geometric eigenvalue optimization is to determine the exact value of $\sigma^*(\gamma,k)$ and to find the metrics for which it is attained.
\begin{definition}
A metric $g$ on $\Sigma_{\gamma,k}$ is called {\em maximal} if $\sigma^*(\gamma,k) = \bar\sigma_1(\Sigma_{\gamma,k},g)$.
\end{definition}

The first result in this direction is due to R.~Weinstock~\cite{W}, who proved that $\sigma^*(0,1)=2\pi$ and the equality is attained on a flat disk. The value $\sigma^*(0,2)$ has not been determined until the seminal work of A.~Fraser and R.~Schoen~\cite{FS1}. Building on their earlier work~\cite{FS0} they studied the question of existence of maximal metrics and were able to resolve it for $\gamma=0$.
For a given $\gamma$ and $k$ the existence of a maximal metric is a delicate question which is closely connected with the theory of {\em free boundary minimal surfaces} in Euclidean balls. 

Recall that an immersion $f\colon M\looparrowright \mathcal{B}^{n}\subset\mathbb{R}^{n}$ is called {\em free boundary minimal immersion} if it is proper, i.e. $f(M)\cap\partial\mathbb{B}^{n} = f(\partial M)$, and critical for the the volume functional in the class of proper immersions. The latter condition is equivalent to vanishing of the mean curvature vector and orthogonality of $f(M)$ to $\partial \mathbb{B}^{n}$.

According to~\cite[Proposition 5.2]{FS1} any maximal metric gives rise to a conformal free boundary immersion into a Euclidean ball $\mathbb{B}^{n}$ (possibly with branched points) such that coordinate functions are first eigenfunctions.
Using this correspondence, in the paper~\cite{FS1} the authors constructed free boundary minimal surfaces in $\mathbb{B}^3$ of genus zero with arbitrary number of boundary components.

%

\subsection{Main result}
\label{MainResults}
Our main result is a generalization of the results of~\cite{FS1} to the case of arbitrary genus. In particular, we prove the following theorem.
\begin{theorem}
\label{MainCorollary}
For all $\gamma\geqslant 0, k\geqslant 1$ the value $\sigma^*(\gamma,k)$ is achieved by a smooth metric. In particular, there exists a free boundary minimal branched immersion $f\colon \Sigma_{\gamma,k}\to\mathbb{B}^{n_{\gamma,k}}$.
\end{theorem}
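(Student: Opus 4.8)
The plan is to realize $\sigma^*(\gamma,k)$ as the supremum, over the finite-dimensional moduli space $\mathcal M_{\gamma,k}$ of conformal classes on $\Sigma_{\gamma,k}$, of the partial functional $\Lambda^\sigma(\Sigma_{\gamma,k},c)=\sup_{g\in c}\bar\sigma_1(\Sigma_{\gamma,k},g)$, and to combine an existence result at a fixed conformal class with a compactness result for the maximization over $\mathcal M_{\gamma,k}$. For the former I would use the dichotomy going back to Fraser--Schoen \cite{FS1} in genus zero and to Petrides in general: for every $c\in\mathcal M_{\gamma,k}$, if $\Lambda^\sigma(\Sigma_{\gamma,k},c)>2\pi$ then this value is attained by a metric in $c$ which is smooth on $\Sigma_{\gamma,k}$. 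Here $2\pi=\bar\sigma_1$ of the flat disk is the energy of the only bubble that can detach from a first eigenfunction (a boundary half-disk; there are no interior bubbles, the target $\mathbb R^n$ being flat), and the regularity half of the statement --- that an a priori merely measurable maximizer is a smooth metric --- comes from free boundary harmonic map theory, as in \cite{FS1}. Granting this, the theorem reduces to showing that $\sigma^*(\gamma,k)>2\pi$ for $(\gamma,k)\neq(0,1)$, and that the supremum $\sigma^*(\gamma,k)=\sup_c\Lambda^\sigma(\Sigma_{\gamma,k},c)$ is attained at an interior point of $\mathcal M_{\gamma,k}$. (The case $(0,1)$ is Weinstock's theorem \cite{W}: the flat disk is maximal and is itself a free boundary totally geodesic immersion.)

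For the strict inequality $\sigma^*(\gamma,k)>2\pi$ --- and, more importantly, for the comparison inequalities needed below --- I would argue by induction on the topological complexity, gluing near-maximal test metrics. Given a near-maximizer on a surface $\Sigma_{\gamma',k'}$ obtained from $\Sigma_{\gamma,k}$ by removing a handle, by capping a boundary component, or by trading a handle for a boundary component, one reattaches the missing piece across a thin neck, redistributes a small amount of boundary length onto it, and bounds $\bar\sigma_1$ from below via the min--max characterization of $\sigma_1$; as the neck degenerates the normalized first eigenvalue of the glued metric converges to that of $\Sigma_{\gamma',k'}$, and a careful choice of competitor functions localized away from the neck shows the limit is strictly exceeded. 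This is precisely how Fraser--Schoen show that $\sigma^*(0,k)$ is strictly increasing in $k$; the new input here is to run the argument when a \emph{handle} is added, which is where most of the technical work lies. The conclusion is that $\sigma^*(\gamma,k)>\sigma^*(\gamma_0,k_0)$ whenever $\Sigma_{\gamma_0,k_0}$ arises as a component of a nodal degeneration of $\Sigma_{\gamma,k}$ other than a disk bubble, and in particular $\sigma^*(\gamma,k)>2\pi$.

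For compactness in moduli space, take a maximizing sequence of metrics $g_j$ on $\Sigma_{\gamma,k}$ with $\bar\sigma_1(\Sigma_{\gamma,k},g_j)\to\sigma^*(\gamma,k)$, and represent the conformal class $[g_j]$ by a canonical constant-curvature metric $h_j$. If the $[g_j]$ stay in a compact subset of $\mathcal M_{\gamma,k}$, then after diffeomorphisms $h_j\to h_0$ smoothly, and using the continuity of $c\mapsto\Lambda^\sigma(\Sigma_{\gamma,k},c)$ on $\mathcal M_{\gamma,k}$ --- a theorem of Petrides --- one gets $\Lambda^\sigma(\Sigma_{\gamma,k},[h_0])=\sigma^*(\gamma,k)$, reducing the problem to a fixed conformal class. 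Otherwise $[g_j]$ leaves every compact set: in the Deligne--Mumford sense for bordered surfaces a multicurve of essential simple closed curves and essential arcs collapses, breaking $\Sigma_{\gamma,k}$ into nodal components $\Sigma_{\gamma_i,k_i}$. A concentration--compactness analysis of the first eigenfunctions of $g_j$, normalized in $L^2(\partial\Sigma_{\gamma,k},ds_{g_j})$ and in the spirit of Petrides, shows that the mass of the limiting eigenfunction --- being a \emph{first} eigenfunction --- concentrates on a single limit object: a boundary disk bubble, contributing at most $2\pi$, or a proper subsurface $\Sigma_{\gamma_i,k_i}$ strictly subordinate to $\Sigma_{\gamma,k}$. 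Hence $\sigma^*(\gamma,k)=\lim_j\bar\sigma_1(\Sigma_{\gamma,k},g_j)\le\max\bigl(2\pi,\ \max_i\sigma^*(\gamma_i,k_i)\bigr)$, which by the previous paragraph is $<\sigma^*(\gamma,k)$ --- a contradiction. So the $[g_j]$ do not degenerate, and the supremum is attained at an interior $c_0\in\mathcal M_{\gamma,k}$.

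Finally $\Lambda^\sigma(\Sigma_{\gamma,k},c_0)=\sigma^*(\gamma,k)>2\pi$, so the fixed-conformal-class dichotomy produces a metric $g_0\in c_0$, smooth on $\Sigma_{\gamma,k}$, with $\bar\sigma_1(\Sigma_{\gamma,k},g_0)=\sigma^*(\gamma,k)$; this $g_0$ is maximal. The second assertion is then immediate from \cite[Proposition 5.2]{FS1}: a suitably normalized basis of first eigenfunctions of $(\Sigma_{\gamma,k},g_0)$ defines a conformal harmonic map into a Euclidean ball $\mathbb B^{n_{\gamma,k}}$ meeting the sphere orthogonally along the boundary, i.e. a free boundary branched minimal immersion. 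I expect the main obstacle to be the two ingredients that are genuinely new relative to \cite{FS1}: establishing the strict comparison inequality in the presence of a handle, and carrying out the concentration--compactness dichotomy for degenerations of bordered surfaces of positive genus, including the collapse of arcs; the rest is a combination of Weinstock's theorem, the Fraser--Schoen/Petrides fixed-conformal-class machinery, and \cite[Proposition 5.2]{FS1}.
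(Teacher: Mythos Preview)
Your overall architecture is sound, but it is not how the paper proceeds, and your plan makes the hardest step harder than necessary.

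The paper does \emph{not} re-derive the concentration--compactness existence machinery; it cites it as a black box. Theorem~\ref{regularity} (Fraser--Schoen for $\gamma=0$, Petrides for $\gamma>0$) already says: if $\sigma^*(\gamma,k)>\max\bigl(\sigma^*(\gamma-1,k),\,\sigma^*(\gamma,k-1)\bigr)$, then $\sigma^*(\gamma,k)$ is achieved by a smooth metric. So the entire content of the paper is to verify this two-part monotonicity, and the boundary-component half is already Fraser--Schoen's Theorem~\ref{hole}. Everything reduces to showing $\sigma^*(\gamma+1,k)>\sigma^*(\gamma,k)$ once $\sigma^*(\gamma,k)$ is achieved.

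Here the paper's route diverges sharply from yours. You propose to attach a handle directly to a near-maximizer on $\Sigma_{\gamma,k}$ and show a \emph{strict} gain, ``redistributing a small amount of boundary length onto it''. But an interior handle carries no boundary, so there is nothing to redistribute; the Steklov normalization is untouched, and producing a strict increase of $\sigma_1$ by this mechanism is genuinely unclear. The paper sidesteps this entirely: instead of proving a strict inequality for handle-attachment, it proves only the \emph{weak} inequality
\[
\sigma^*(\gamma+1,k)\ \geqslant\ \sigma^*(\gamma,k+1)
\]
(Theorem~\ref{MainTheorem}), obtained by taking a near-maximizer on $\Sigma_{\gamma,k+1}$ and gluing two short arcs on different boundary components to each other. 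This identification increases the genus by one and decreases the number of boundary components by one; the resulting metric has two $4\pi$ conical points, and a Neumann bracketing plus an $L^1$-continuity lemma for $\sigma_1$ under densities shows $\bar\sigma_1$ does not drop in the limit. The strictness then comes for free from the chain
\[
\sigma^*(\gamma+1,k)\ \geqslant\ \sigma^*(\gamma,k+1)\ >\ \sigma^*(\gamma,k),
\]
the second inequality being Fraser--Schoen's hole result. Your re-proof of the compactness dichotomy is therefore unnecessary, and the one step you flagged as the main obstacle is avoided altogether by this reduction.
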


Theorem~\ref{MainCorollary} provides the first examples of free boundary minimal immersions of $\Sigma_{\gamma,k}$ to the unit ball for arbitrary $\gamma$ and $k$. The examples recently constructed in~\cite{FPZ,FS1,KL,KW,Ketover} are embedded free boundary minimal surfaces in $\mathbb{B}^3$ with small $k$ and sufficiently large $\gamma$ or vice versa. In contrast, the surfaces obtained in Theorem~\ref{MainCorollary} are not necessarily embedded and the dimension $n_{\gamma,k}$ of the ball is not guaranteed to be equal to $3$ but rather bounded by the multiplicity of the first Steklov eigenvalue. 

By multiplicity bounds in~\cite{Jammes2,KKP}, one has the following inequalities,
$$
n_{\gamma,k}\leqslant 4\gamma + 2k.
$$
Moreover, if $\gamma\geqslant 2$ then
$$
n_{\gamma,k}\leqslant 2\gamma + 3.
$$
For discussion on optimality of these bounds and more results for particular values of $\gamma$ and $k$ see~\cite{Jammes2}.

\subsection{Ideas of the proof}  
 
 We make use of regularity theory for maximal metrics developed in~\cite{FS1}. In particular, they proved the case $\gamma=0$ of the following theorem. It was extended to the case $\gamma>0$ by R.~Petrides.

\begin{theorem}[A.~Fraser, R.~Schoen~\cite{FS1}, R.~Petrides~\cite{Petrides}]
\label{regularity}
Suppose that 
\begin{equation}
\label{condition}
\sigma^*(\gamma, k)>\max(\sigma^*(\gamma-1,k),\sigma^*(\gamma,k-1)),
\end{equation}
 where we put $\sigma^*(-1,k) = \sigma^*(\gamma, 0) = -\infty$. Then $\sigma^*(\gamma, k)$ is achieved by a smooth metric.
\end{theorem}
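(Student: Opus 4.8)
The plan is a concentration-compactness analysis of a maximizing sequence, with the strict gap~\eqref{condition} entering at exactly one point: to preclude degeneration of the underlying conformal structure. Throughout assume $(\gamma,k)\neq(0,1)$, the excluded case being Weinstock's theorem.

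\emph{Reduction to a conformal problem.} For a conformal class $c$ on $\Sigma_{\gamma,k}$, fix a background metric $g_0\in c$ and set $\Lambda_1(c)=\sup_{g\in c}\bar\sigma_1(\Sigma_{\gamma,k},g)$, so that $\sigma^*(\gamma,k)=\sup_c\Lambda_1(c)$. Since the Dirichlet energy is conformally invariant in dimension two, the Rayleigh quotient of $\sigma_1$ depends on $g\in c$ only through the induced probability measure on $\partial M$ (after rescaling so $L_g(\partial M)=1$); hence
\[
\Lambda_1(c)\ =\ \sup_{\mu}\ \inf_{\substack{E\subset C^\infty(M)\\ \dim E=2,\ 1\in E}}\ \sup_{0\neq u\in E}\ \frac{\int_M|\nabla u|^2\,dv_{g_0}}{\int_{\partial M}u^2\,d\mu},
\]
the outer supremum being over Radon probability measures $\mu$ on $\partial M$; write $\sigma_1(c,\mu)$ for the inner quantity. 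The analytic core is the conformally constrained problem, which is precisely where the regularity theory of Fraser--Schoen (for $\gamma=0$) and Petrides (all $\gamma$) is invoked: \emph{if $\Lambda_1(c)>2\pi$, then $\Lambda_1(c)$ is attained by a measure of the form $e^{2\omega}\,ds_{g_0}$ with $\omega$ smooth}, i.e. by a smooth metric in $c$ whose first eigenfunctions give a conformal free boundary branched minimal immersion into a Euclidean ball; here smoothness comes from bootstrapping the Euler--Lagrange conditions for the maximizing measure through elliptic regularity, branch points being removable for the metric. I would take this as given and concentrate on the passage from $\Lambda_1$ to $\sigma^*$. One also uses the (known) strict monotonicity $\sigma^*(\gamma,k)>2\pi=\sigma^*(0,1)$ for $(\gamma,k)\neq(0,1)$, which under~\eqref{condition} also follows by descent to the elementary bound $\sigma^*\geq2\pi$.

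\emph{Compactness and degeneration in moduli space.} Pick $c_n$ with $\Lambda_1(c_n)\to\sigma^*(\gamma,k)$ and let $h_n\in c_n$ be the hyperbolic metric with geodesic boundary, i.e. a point of the moduli space $\mathcal M_{\gamma,k}$. By Mumford-type compactness (applied to the double of $(\Sigma_{\gamma,k},h_n)$), after passing to a subsequence either $[h_n]$ stays in a compact region — so, after diffeomorphisms, $h_n\to h_\infty$ and $c_n\to c_\infty$ — or some simple closed $h_n$-geodesic $\delta_n$ has length $\to0$. In the second case $\delta_n$ is an essential interior curve — non-separating (cutting yields $\Sigma_{\gamma-1,k}$ with two extra marked points) or separating (cutting yields $\Sigma_{\gamma_1,k_1+1}\sqcup\Sigma_{\gamma_2,k_2+1}$ with $\gamma_1+\gamma_2=\gamma$, $k_1+k_2=k$, both pieces essential) — or a curve parallel to a boundary component, which collapses to a cusp (yielding $\Sigma_{\gamma,k-1}$ with a puncture). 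In every case each resulting piece, after capping the cut with a disk, is some $\Sigma_{\gamma',k'}$ with $\gamma'\leq\gamma$, $k'\leq k$, $(\gamma',k')\neq(\gamma,k)$, hence with $\gamma'\leq\gamma-1$ or $k'\leq k-1$; and the collar lemma provides around $\delta_n$ a flat cylinder of conformal modulus $\to\infty$.

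\emph{Ruling out degeneration, and conclusion.} Suppose $[h_n]$ degenerates along $\delta_n$, and let $\mu_n$ nearly maximize $\Lambda_1(c_n)$, so $\sigma_1(c_n,\mu_n)\to\sigma^*(\gamma,k)>0$. Testing with the two-dimensional space $\mathrm{span}(1,\chi_n)$, where $\chi_n$ equals $1$ on one side of $\delta_n$ and $0$ on the other and interpolates logarithmically across the long collar (so $\int|\nabla\chi_n|^2\to0$), an elementary $2\times2$ computation gives $\sigma_1(c_n,\mu_n)\leq o(1)/(m_n(1-m_n))$, where $m_n$ is the $\mu_n$-mass of one side; hence the mass must asymptotically concentrate on a single piece $\Sigma_B$. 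Transplanting a first eigenfunction of the capped surface $\widehat\Sigma_B\cong\Sigma_{\gamma',k'}$ for the restricted (renormalized) measure — extending it across the collar and over the other piece by the nearly constant value it takes on $\delta_n$, at cost $o(1)$ in Dirichlet energy, then correcting its mean — yields $\sigma_1(c_n,\mu_n)\leq\sigma^*(\gamma',k')+o(1)\leq\max(\sigma^*(\gamma-1,k),\sigma^*(\gamma,k-1))+o(1)$ (using monotonicity of $\sigma^*$). Letting $n\to\infty$ contradicts~\eqref{condition}; hence $c_n\to c_\infty$ in the interior of $\mathcal M_{\gamma,k}$. Finally, with $\mu_n\rightharpoonup\mu$ weakly-$*$, the bubble analysis of~\cite{FS1,Petrides} shows that any atom of $\mu$ carries off exactly a round half-disk of normalized first eigenvalue $2\pi$, whence $\sigma^*(\gamma,k)=\lim\sigma_1(c_n,\mu_n)\leq\max(2\pi,\Lambda_1(c_\infty))$; since $\sigma^*(\gamma,k)>2\pi$ this forces $\Lambda_1(c_\infty)=\sigma^*(\gamma,k)>2\pi$, and the conformal statement of the first step produces a smooth metric in $c_\infty$ attaining $\sigma^*(\gamma,k)$, the desired maximal metric.

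\emph{Main obstacle.} The genuinely hard inputs are the conformally constrained maximization together with its two companion compactness statements: the quantitative bubble analysis, establishing that a concentrating maximizing sequence of boundary measures loses \emph{exactly} $2\pi$ (so that $\sigma^*(\gamma,k)>2\pi$ truly forbids collapse within a conformal class) and that the surviving maximizer is a smooth metric; and — specific to $\gamma>0$ — the neck-surgery estimate ruling out moduli-space degeneration, which is the only place where~\eqref{condition} is used, and hence the reason it appears as a hypothesis. The genus-zero versions are due to Fraser--Schoen~\cite{FS1}; the extension to arbitrary genus, where the moduli space is genuinely present and its degenerations must be controlled, is due to Petrides~\cite{Petrides}.
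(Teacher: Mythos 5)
The paper does not prove Theorem~\ref{regularity}: it is imported as a black box from Fraser--Schoen~\cite{FS1} (genus zero) and Petrides~\cite{Petrides} (general genus), so there is no internal argument to compare yours against. Your outline does follow the strategy of those references — reduce to the conformally constrained problem $\Lambda_1(c)$, invoke the attainment/regularity result under the threshold $\Lambda_1(c)>2\pi$, and use the gap~\eqref{condition} only to rule out degeneration of a maximizing sequence of conformal classes — and you are upfront that the two genuinely hard analytic inputs (the conformal maximization with the $2\pi$ bubbling threshold, and the regularity bootstrap) are taken as given. That is a defensible division of labour for a theorem explicitly attributed to others, but it means the only part of the proof you are actually supplying is the moduli-space degeneration analysis, and that part has a concrete gap.

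Your trichotomy of degenerations (interior non-separating geodesic, interior separating geodesic, boundary-parallel curve collapsing to a cusp) is incomplete for bordered surfaces. In the double, a short simple closed geodesic may cross the fixed locus; it is then invariant under the involution and descends to a short essential \emph{orthogeodesic arc} with endpoints on $\partial M$. This mode cannot be dismissed: it is, for instance, the only way an annulus degenerates toward small modulus, and it is how two boundary components of a planar domain collide. Worse, it is precisely the case where your comparison breaks down. If the short arc has both endpoints on the same boundary component and is non-separating, cutting along it produces $\Sigma_{\gamma-1,k+1}$ (Euler characteristic increases by one and the boundary component splits in two), so your transplantation argument only yields $\limsup\sigma_1(c_n,\mu_n)\leqslant\sigma^*(\gamma-1,k+1)$. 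But $\sigma^*(\gamma-1,k+1)$ is \emph{not} bounded by $\max(\sigma^*(\gamma-1,k),\sigma^*(\gamma,k-1))$; by Theorem~\ref{MainTheorem} of this very paper one only knows $\sigma^*(\gamma-1,k+1)\leqslant\sigma^*(\gamma,k)$, with no strict inequality, so no contradiction with~\eqref{condition} results. (Indeed, the arc-pinched limit is essentially the glued surface $M_\varepsilon$ of Section~\ref{construction} read backwards, which is exactly why equality can occur there.) Handling this case requires a finer argument — either showing the limit configuration can be reinterpreted as a degenerate metric on $\Sigma_{\gamma,k}$ itself, or a sharper energy estimate across the half-collar — and this is where the real content of Petrides' extension to $\gamma>0$ lies. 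As written, your proof does not close this case.
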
 

In the same paper~\cite{FS1} the authors proved a part of the condition~\eqref{condition}.

\begin{theorem}[A.~Fraser, R.~Schoen~\cite{FS1}]
\label{hole}
Suppose that $\sigma^*(\gamma, k)$ is achieved by a smooth metric. Then $\sigma^*(\gamma, k+1)>\sigma^*(\gamma, k)$.
\end{theorem}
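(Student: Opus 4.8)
The plan is to prove the strict inequality by an \emph{excision surgery} applied to the maximal metric. Let $g$ be a smooth metric on $\Sigma_{\gamma,k}$ with $\bar\sigma_1(\Sigma_{\gamma,k},g)=\sigma^*(\gamma,k)$; write $\sigma=\sigma_1(\Sigma_{\gamma,k},g)$ and $L=L_g(\partial\Sigma_{\gamma,k})$, so $\sigma^*(\gamma,k)=\sigma L$. Fix an interior point $p$ (to be chosen later) with conformal coordinates $z$ centred at $p$, $g=e^{2\phi}|dz|^2$, and for small $\epsilon>0$ remove the coordinate disc $D_\epsilon=\{|z|<\epsilon\}$. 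The resulting surface $M_\epsilon=\Sigma_{\gamma,k}\setminus\overline{D_\epsilon}$, with the restricted metric, is diffeomorphic to $\Sigma_{\gamma,k+1}$, and its boundary has length $L+2\pi e^{\phi(0)}\epsilon+O(\epsilon^2)$ --- it \emph{grows} linearly in $\epsilon$. The point is that $\sigma_1(M_\epsilon,g)$ \emph{decreases} by at most a comparable amount, so that for a good choice of $p$ the normalized eigenvalue strictly increases, which forces $\sigma^*(\gamma,k+1)\geqslant\bar\sigma_1(M_\epsilon,g)>\sigma^*(\gamma,k)$.

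The eigenvalue estimate is the analytic core. The bound $\sigma_1(M_\epsilon,g)\leqslant\sigma$ is immediate: restrict a first eigenfunction of $g$ to $M_\epsilon$ and subtract the (order $\epsilon$) constant restoring zero boundary mean. For a matching lower bound I would start from any test function $f$ on $M_\epsilon$ with zero mean on $\partial M_\epsilon$, extend it harmonically across $D_\epsilon$, and use that in dimension two the Dirichlet energy is conformally invariant, so that the energy of the extension over $D_\epsilon$ is controlled by the tangential derivative of $f$ along $\partial D_\epsilon$ and is $O(\epsilon^2)$ once $f$ is a near-minimizer; feeding the extended function into the variational characterization of $\sigma$ on $\Sigma_{\gamma,k}$ then gives
$$
\sigma_1(M_\epsilon,g)\;\geqslant\;\sigma\bigl(1-2\pi e^{\phi(0)}\,\kappa(p)\,\epsilon\bigr)+O(\epsilon^2),
$$
where $\kappa(p)=\sup\{\,\psi(p)^2\,/\!\int_{\partial\Sigma_{\gamma,k}}\psi^2\,ds_g\,\}$, the supremum over the first eigenspace; equivalently $\kappa(p)=\sum_i u_i(p)^2$ for an $L^2(\partial\Sigma_{\gamma,k})$-orthonormal eigenbasis $\{u_i\}$. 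Combined with the length expansion this yields
$$
\bar\sigma_1(M_\epsilon,g)=\sigma^*(\gamma,k)+2\pi e^{\phi(0)}\,\sigma\,\bigl(1-L\,\kappa(p)\bigr)\,\epsilon+O(\epsilon^2),
$$
so it suffices to exhibit an interior point with $\kappa(p)<1/L$ and then let $\epsilon\to0^+$.

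Producing such a point is, I expect, the main obstacle. Since the $u_i$ are harmonic, $\kappa=\sum_iu_i^2$ is subharmonic, hence attains its maximum on $\partial\Sigma_{\gamma,k}$, where $\int_{\partial\Sigma_{\gamma,k}}\kappa\,ds_g=n$ ($n$ the multiplicity of $\sigma_1$), so its boundary average is $n/L$. As recalled in the introduction, $g$ is conformal to a free boundary branched minimal immersion $\Phi\colon\Sigma_{\gamma,k}\looparrowright\mathbb{B}^{n}$ whose coordinates span the first eigenspace, with $|\Phi|\equiv1$ on $\partial\Sigma_{\gamma,k}$; using that an orthonormal eigenbasis has constant sum of squares on the boundary (a feature of extremality), $\kappa$ is a constant multiple of $|\Phi|^2$, and the condition $\kappa(p)<1/L$ becomes $|\Phi(p)|^2<1/n$ for some interior point. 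When $n=1$ this is automatic: a first eigenfunction has zero boundary mean, hence changes sign on $\partial\Sigma_{\gamma,k}$, hence vanishes at an interior point. For $n\geqslant2$ one must either deduce $\min_{\Sigma_{\gamma,k}}|\Phi|^2<1/n$ from the maximality of $g$ (stationarity together with the sign of the second variation), or else replace the round disc by a thin tubular neighbourhood of a carefully chosen embedded arc $\gamma$ in the interior: this is again a topological disc, the boundary-length gain is $\approx 2L_g(\gamma)$, and the eigenvalue loss is governed by the matrix $\bigl[L_g(\gamma)^{-1}\!\int_\gamma u_iu_j\,ds_g\bigr]$; choosing $\gamma$ to equidistribute makes this matrix nearly isotropic with trace $<n/L$, hence of operator norm $<1/L$, which is precisely what the argument needs and holds because $|\Phi|^2<1$ in the interior.

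Finally one must make the eigenvalue expansion rigorous, i.e.\ show that the first eigenfunction $f_\epsilon$ of $(M_\epsilon,g)$ converges, with enough quantitative control, to a first eigenfunction of $(\Sigma_{\gamma,k},g)$ as $\epsilon\to0$. This is a standard but somewhat delicate perturbation argument: elliptic boundary estimates give $C^1$ convergence on compact subsets of $\Sigma_{\gamma,k}\setminus\{p\}$, while a blow-up at scale $\epsilon$ near $p$ --- in which the Steklov condition $\partial_nf_\epsilon=\sigma_\epsilon f_\epsilon$ rescales to a Neumann condition on the exterior of the unit disc, whose only bounded harmonic solutions are constants --- pins the boundary trace of $f_\epsilon$ on $\partial D_\epsilon$ to leading order and produces the remainder terms at order $O(\epsilon^2)$. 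With the point $p$ (or the arc $\gamma$) chosen as above, this completes the proof.
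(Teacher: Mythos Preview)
First, note that the paper does not prove this theorem: it is stated as a result of Fraser--Schoen~\cite{FS1} and only \emph{used}, together with Theorem~\ref{MainTheorem}, in the proof of Corollary~\ref{handle} and Theorem~\ref{MainCorollary}. So there is no proof in the paper to compare against. What I can do is assess your proposal on its own terms.

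Your overall strategy --- excise a small disc $D_\epsilon(p)$ from the maximal metric and show that the normalized first eigenvalue strictly increases --- is indeed the approach of Fraser--Schoen, and your asymptotic bookkeeping is set up correctly: the relevant first-order condition is exactly $\kappa(p)<1/L$ for some interior point $p$, where $\kappa=\sum_j v_j^2$ for an $L^2(\partial\Sigma)$-orthonormal basis $\{v_j\}$ of the first eigenspace.

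The genuine gap is in producing such a point. Your key reduction rests on the claim that ``an orthonormal eigenbasis has constant sum of squares on the boundary (a feature of extremality)'', so that $\kappa$ is a constant multiple of $|\Phi|^2$. This is not what extremality gives you. Maximality of $g$ yields \emph{some} eigenfunctions $u_1,\dots,u_n$ (the coordinates of the free boundary minimal immersion $\Phi$) with $\sum_i u_i^2\equiv 1$ on $\partial\Sigma$; it does not assert that these $u_i$ are $L^2(\partial\Sigma)$-orthonormal, nor that they span the full eigenspace, nor that $\kappa$ is constant on $\partial\Sigma$. Writing $u_i=\sum_j A_{ij}v_j$, the boundary identity reads $v^{\!\top}A^{\!\top}Av\equiv 1$, which for generic $A$ says nothing about $|v|^2=\kappa$. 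So the passage from $\kappa(p)<1/L$ to $|\Phi(p)|^2<1/n$ is unjustified. Moreover, even granting that reduction, the inequality $\min_{\Sigma}|\Phi|^2<1/n$ is itself nontrivial (for the critical catenoid, for instance, $\Phi$ never vanishes), and you only gesture at ``the sign of the second variation'' or an alternative arc-excision without carrying either out.

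A secondary gap is in the asymptotics themselves: to get the $O(\epsilon^2)$ remainder you need uniform $C^1$ control of the eigenfunction $f_\epsilon$ near $\partial D_\epsilon$, and the blow-up argument you outline (Steklov $\to$ Neumann on the exterior of the unit disc) has to be made quantitative. You flag this, but it is more than routine, because the competing boundary conditions on $\partial\Sigma$ and on $\partial D_\epsilon$ live at different scales. In Fraser--Schoen's actual proof the choice of $p$ and the test-function estimates are tied together through the minimal-surface structure of $\Phi$ (in particular the identity $\sum_i u_i^2\equiv 1$ on $\partial\Sigma$ and $|\Phi|<1$ in the interior), rather than through the orthonormal quantity $\kappa$; reproducing their argument would require reorganizing your lower-bound step around $\Phi$ rather than around $\kappa$.
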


Note that if $\gamma = 0$, then condition~\eqref{condition} becomes simply $\sigma^*(0, k+1)>\sigma^*(0, k)$ which is exactly a particular case of Theorem~\ref{hole}. This is the exact observation that allowed A.~Fraser and R.~Schoen to conclude the existence of free boundary minimal surfaces of genus zero.  

Prior to the work of A.~Fraser and R.~Schoen~\cite{FS1} only two examples of free boundary minimal surfaces in $\mathbb{B}^n$ were known: flat disk and so-called {\em critical catenoid}, a unique catenoid that intersects the unit sphere orthogonally. As it was proved in~\cite{FS1}, critical catenoid is the unique maximal metric on an annulus.  Nowadays there are several other constructions of free boundary minimal surfaces available: by gluing techniques~\cite{FPZ,KL,KW} and by min-max method~\cite{Ketover}. It is unknown whether the metrics induced on these minimal surfaces are maximal.

In the present article we complete the proof of condition~\eqref{condition}. In particular, in Section~\ref{construction} we prove the following more general theorem.

\begin{theorem}
\label{MainTheorem}
For all $\gamma\geqslant 0, k\geqslant 1$ one has $\sigma^*(\gamma+1,k)\geqslant \sigma^*(\gamma,k+1)$
\end{theorem}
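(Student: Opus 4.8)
The inequality $\sigma^*(\gamma+1,k)\geqslant\sigma^*(\gamma,k+1)$ says that adding a handle is at least as good as adding a boundary component. The natural approach is a surgery/degeneration argument: start with a near-maximal metric $g$ on $\Sigma_{\gamma,k+1}$, and build from it a metric on $\Sigma_{\gamma+1,k}$ whose first normalized Steklov eigenvalue is at least $\sigma^*(\gamma,k+1)-\varepsilon$ for arbitrary $\varepsilon>0$. Since a boundary component of $\Sigma_{\gamma,k+1}$ is a circle, the idea is to take two boundary circles of $\Sigma_{\gamma,k+1}$ — no wait; we only want to remove \emph{one} boundary component and gain \emph{one} handle. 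So instead one should take a single near-maximal surface $\Sigma_{\gamma,k+1}$, pick one boundary circle $C$, and glue in a small ``bridge'' that turns one of the remaining configuration into a handle. Concretely, a cleaner route: take a near-maximal $(\Sigma_{\gamma,k+1},g)$, degenerate it by collapsing a neck so that in the limit one boundary circle is replaced by a handle. More robustly, I would instead combine two copies: consider the double, or glue a long thin tube. Let me commit to the construction that appears most controllable.

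**The construction.** I would fix a maximal-up-to-$\varepsilon$ metric $g$ on $\Sigma_{\gamma,k+1}$ realizing a Steklov eigenfunction system, and isolate one boundary component $C$ of length $\ell$. Near $C$ the surface looks like a half-cylinder $[0,\delta)\times C$. The plan is to take \emph{two} boundary components $C_1,C_2$ of $\Sigma_{\gamma,k+1}$, remove small half-disk collars, and glue in a handle (a cylinder $[0,L]\times S^1$) connecting them, scaling this cylinder to have very small boundary length relative to the rest. This replaces two boundary circles by one tube, changing $(\gamma,k+1)\mapsto(\gamma+1,k-1)$ — that decreases $k$ by two, not one. To land on $(\gamma+1,k)$ I should instead attach the handle to a single boundary component (gluing the tube's two ends to two half-disks cut out of a collar of the \emph{same} circle $C$), which converts one boundary circle plus a trivial handle into a genuine handle while keeping the other $k$ circles: this realizes $(\gamma,k+1)\mapsto(\gamma+1,k)$. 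One then runs the test-function argument: extend the eigenfunctions of $(\Sigma_{\gamma,k+1},g)$ across the thin glued region by cutoff, verify the Rayleigh quotient on $\Sigma_{\gamma+1,k}$ approximates $\bar\sigma_1(\Sigma_{\gamma,k+1},g)$ as the glued tube shrinks, and use the variational characterization of $\sigma_1$ together with orthogonality (the mean value of test functions on the new boundary is controlled since we removed only a tiny collar of $C$). Passing $\varepsilon\to 0$ and then taking the supremum over $g$ gives $\sigma^*(\gamma+1,k)\geqslant\sigma^*(\gamma,k+1)$.

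**Controlling the Rayleigh quotient.** The technical heart is the estimate on the thin neck: one must show that the Dirichlet energy $\int_M|\nabla u|^2$ contributed by the cutoff region is $o(1)$ and that the new boundary integral $\int_{\partial M}u^2\,ds$ — which now excludes the little collar of $C$ and includes the tube's ends — differs from the old one by $o(1)$, both measured after rescaling so the total boundary length is normalized. Choosing the tube of circumference $\rho$ and length $L$ with $\rho\to 0$, $\rho L\to 0$, standard logarithmic-cutoff estimates (as in capacity arguments on surfaces, and as used by Fraser--Schoen and Petrides in related gluing constructions) give the Dirichlet energy bound; the length of the removed collar of $C$ can be taken $\to 0$ so the boundary integral is essentially unchanged. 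One also needs the $(k+1)$-dimensional test space spanned by the first eigenfunctions (plus constants) to remain ``almost orthogonal'' on the new boundary, so that min-max over it bounds $\sigma_1(\Sigma_{\gamma+1,k})$ from below; a Gram-matrix perturbation argument handles this.

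**Main obstacle.** The main difficulty is that Steklov eigenfunctions are harmonic on $M$ but boundary-normalized, so the relevant energy is the \emph{bulk} Dirichlet energy while the normalization lives on $\partial M$; cutting out a collar of $C$ destroys harmonicity there, so the extended test functions are only \emph{almost} harmonic, and one must show this does not inflate the Rayleigh quotient. Equivalently, the subtle point is ensuring that gluing in the handle does not create a low-energy mode supported on the neck that would \emph{lower} $\sigma_1$ below the target — i.e., that the thin tube, having tiny boundary, contributes negligibly to both numerator and denominator. Getting the scaling of $\rho$ and $L$ right so that all error terms vanish simultaneously, and verifying the orthogonality conditions survive, is where the real work lies; everything else is a fairly standard test-function computation.
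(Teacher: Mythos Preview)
Your proposal has the variational inequality running in the wrong direction. You plan to transplant first eigenfunctions of $(\Sigma_{\gamma,k+1},g)$ onto the glued surface $\Sigma_{\gamma+1,k}$ and compute their Rayleigh quotients there. But plugging a test function (or a finite-dimensional test space) into the Rayleigh quotient on $\Sigma_{\gamma+1,k}$ gives an \emph{upper} bound on $\sigma_1(\Sigma_{\gamma+1,k})$, whereas the theorem requires a \emph{lower} bound: you must show $\bar\sigma_1(\Sigma_{\gamma+1,k})\geqslant \bar\sigma_1(\Sigma_{\gamma,k+1})-o(1)$. Your ``main obstacle'' paragraph correctly identifies the danger (a low-energy mode on the neck could push $\sigma_1$ down), but your method cannot rule it out, since it never examines an arbitrary competitor on the new surface. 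Separately, the topological construction is not well-defined as written: gluing the circular ends of a tube to ``half-disks cut out of a collar of the same circle $C$'' asks you to attach circles to arcs, and the various surgeries you sketch do not reliably produce $(\gamma+1,k)$.

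The paper's argument fixes both issues at once with a different construction and a monotonicity principle. One picks small arcs $I_{1,\varepsilon},I_{2,\varepsilon}$ of length $2\varepsilon$ on \emph{two different} boundary components of $(\Sigma_{\gamma,k+1},g)$ and identifies them by an orientation-reversing isometry; the quotient $M_\varepsilon$ is $\Sigma_{\gamma+1,k}$ with a metric having two $4\pi$ conical points at the endpoints of the glued arc $I_\varepsilon$. The lower bound comes from Dirichlet--Neumann bracketing: imposing an extra Neumann condition along $I_\varepsilon$ can only \emph{decrease} the eigenvalues, and the resulting problem is exactly the Steklov problem on the original $M=\Sigma_{\gamma,k+1}$ with density set to zero on $I_{1,\varepsilon}\cup I_{2,\varepsilon}$. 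Hence $\sigma_1(M_\varepsilon,g_\varepsilon,\rho_\varepsilon)\geqslant \sigma_1(M,g,\tilde\rho_\varepsilon)$ with $\tilde\rho_\varepsilon\to\rho$ in $L^1$, and a separate convergence lemma for $\sigma_1$ under $L^1$-convergence of bounded densities sends the right-hand side to $\sigma_1(M,g,\rho)$. No cutoff or capacity estimate is needed; the inequality comes for free from bracketing, and the conical singularities are smoothed afterward by a density-approximation argument.
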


As a corollary, we obtain the second part of condition~\eqref{condition} complementing Theorem~\ref{hole}.
\begin{corollary}
\label{handle}
Suppose that $\sigma^*(\gamma, k)$ is achieved by a smooth metric. Then $\sigma^*(\gamma+1, k)>\sigma^*(\gamma, k)$.
\end{corollary}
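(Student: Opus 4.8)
The plan is simply to concatenate the two inequalities already at our disposal. First I would invoke Theorem~\ref{MainTheorem} with genus $\gamma$ and $k$ boundary components, obtaining
\[
\sigma^*(\gamma+1,k)\geqslant\sigma^*(\gamma,k+1).
\]
Next, using the standing hypothesis that $\sigma^*(\gamma,k)$ is achieved by a smooth metric, Theorem~\ref{hole} applies verbatim and yields the \emph{strict} inequality $\sigma^*(\gamma,k+1)>\sigma^*(\gamma,k)$. Chaining the two gives
\[
\sigma^*(\gamma+1,k)\geqslant\sigma^*(\gamma,k+1)>\sigma^*(\gamma,k),
\]
which is precisely the assertion of the corollary.

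The only point that needs checking is that the hypotheses line up: Theorem~\ref{hole} demands that the maximal value for the \emph{same} pair $(\gamma,k)$ be attained by a smooth metric, which is exactly what is assumed, and Theorem~\ref{MainTheorem} is stated for all $\gamma\geqslant 0$, $k\geqslant 1$, matching the range in the corollary. So there is no genuine obstacle in this deduction — it is a one-line combination of two previously recorded facts. All of the difficulty is concentrated in Theorem~\ref{MainTheorem} itself, whose proof in Section~\ref{construction} (producing, from a near-maximal metric on $\Sigma_{\gamma,k+1}$, a metric on $\Sigma_{\gamma+1,k}$ that trades a boundary circle for a handle without decreasing $\bar\sigma_1$) is the technical heart of the paper; the corollary merely harvests it together with Theorem~\ref{hole}.
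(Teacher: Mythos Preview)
Your argument is correct and is essentially identical to the paper's own proof, which simply chains Theorem~\ref{MainTheorem} and Theorem~\ref{hole} to obtain $\sigma^*(\gamma+1,k)\geqslant \sigma^*(\gamma,k+1) > \sigma^*(\gamma,k)$.
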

\begin{proof}
By Theorem~\ref{hole} and Theorem~\ref{MainTheorem} we have a chain of inequalities
$$
\sigma^*(\gamma+1,k)\geqslant \sigma^*(\gamma,k+1) > \sigma^*(\gamma,k).
$$
\end{proof}

\begin{proof}[Idea of the proof of Theorem~\ref{MainTheorem}] Given a surface $M$ of genus $\gamma$ with $k+1$ boundary components we take two segments of equal length $\varepsilon$ lying on different boundary components. We then glue these segments together to obtain a new surface $M_\varepsilon$ of genus $\gamma+1$ with $k$ boundary components (see Figure 1). The main technical step is to prove that $\bar\sigma_1(M_\varepsilon)\to\bar\sigma_1(M)$ as $\varepsilon\to 0$.
\end{proof}

\begin{proof}[Proof of Theorem~\ref{MainCorollary}]
The proof is by induction on $\gamma+k$. The base $\gamma = 0$, $k=1$ is covered by the Weinstock result for $\sigma^*(0,1)$. Suppose it is proved for all $\gamma,k$ such that $\gamma + k = n$, let us prove it for all $\gamma' + k' = n+1$. By Theorem~\ref{regularity} it is sufficient to show that $\sigma^*(\gamma',k')>\max(\sigma^*(\gamma'-1,k'),\sigma^*(\gamma',k'-1))$. The latter follows from the induction step, Theorem~\ref{hole} and Corollary~\ref{handle}.
\end{proof}

Finally, let us remark that the theory of maximal metrics for the first Steklov eigenvalue is parallel to the theory of maximal metric for the first Laplace eigenvalue, see e.g.~\cite{KNPP,NP, Petrides}. The paper~\cite{KNPP} contains a survey of the recent results. The analog of Corollary~\ref{MainCorollary} has been recently obtained by H.~Matthiesen and A.~Siffert~\cite{MS}. They verify the monotonicity condition similar to~\eqref{condition} which was obtained by R.~Petrides in~\cite{Petrides0}.

\subsection{Organization of the paper} In Section~\ref{prelim} we collect some preliminary facts including discussions on density formulation for Steklov eigenvalues in Section~\ref{prelim1}, non-smooth densities in Section~\ref{prelim2} and conical singularities in Section~\ref{prelim3}. Section~\ref{sectionconvergence} contains a rather general convergence result for the first Steklov eigenvalue. Finally, we prove Theorem~\ref{MainTheorem} in Section~\ref{construction}.

\section{Preliminaries}
\label{prelim}
\subsection{Steklov problem and densities}
\label{prelim1}
In the case of surfaces there is an equivalent but slightly different formulation of the Steklov problem which highlights conformal invariance of harmonic extension. Let $g$ be a metric on $M$ and let $\rho>0$ be a smooth density function. Then we define $\mD_{g,\rho}(u) = \frac{1}{\rho}\partial_n\hat u$. The harmonic extensions of the eigenfunctions of this operator are solutions of
\begin{equation}
\label{rhoSteklov}
\left\{
   \begin{array}{rcl}
	\Delta u &=& 0\quad \mathrm{on}\,\, M,\\
	\partial_nu &=& \sigma\rho u\quad \mathrm{on}\,\, \partial M.
   \end{array}
\right.
\end{equation}

Similarly to $\mD_g$ the operator $\mD_{g,\rho}$ possesses purely discrete spectrum that forms a sequence
$$
0=\sigma_0(M,g,\rho) < \sigma_1(M,g,\rho)\leqslant\sigma_2(M,g,\rho)\leqslant\sigma_3(M,g,\rho)\leqslant\ldots,
$$
where eigenvalues are written with multiplicities. We define the normalized eigenvalues 
$$
\bar\sigma_k(M,g,\rho) = \sigma_k(M,g,\rho) L_{g,\rho}(\partial M), 
$$
where $L_{g,\rho}(\partial M) = \int\limits_{\partial M}\rho\,d\sigma_g$ is the weighted length of the boundary. Thus, we recover $\bar\sigma_k(M,g)$ by setting $\rho\equiv 1$.

Let $\tilde\rho$ be any positive extension of $\rho$ to the interior of $M$. Then by conformal invariance of the Laplacian and the fact that $\rho^2g(\frac{1}{\rho}n,\frac{1}{\rho}n) = g(n,n) = 1$ one has $\mD_{g,\rho} = \mD_{\tilde\rho^2 g}$ and $\bar\sigma_k(M,g,\rho) = \bar\sigma_k(M,\tilde\rho^2 g)$.
Similarly, for any positive function $f\in C^\infty(M)$ one has $\mD_{f^2g,\rho} = \mD_{g,f\rho}$ and
\begin{equation}
\label{conformal}
\bar\sigma_k(M,f^2g,\rho) = \bar\sigma_k(M,g,f\rho).
\end{equation}

We see that positive smooth densities can be interpreted as conformal factors. In the following section we define eigenvalues for non-negative $\rho\in L^\infty(\partial M)$.


\subsection{Non-smooth densities}
\label{prelim2}
In this section we follow the paper of Kokarev~\cite{Kokarev} where a comprehensive study of eigenvalue optimization in the context of measure theory has been done. The results of Kokarev are much more general than we require, so we recall only basic statements.

Given a compact surface (possibly with boundary), a conformal class $c$ of metrics on $M$ and a non-zero measure $\mu$, the eigenvalues $\lambda_k(\mu,c)$ are defined as critical values of the Rayleigh quotient
\begin{equation}
\label{Rayleigh}
R_c(\mu,u) = \frac{\displaystyle\int\limits_M |\nabla u|_g^2\,dv_g}{\displaystyle\int\limits_M u^2\,d\mu},
\end{equation}
in the appropriate space of functions, where $g$ is any metric in the conformal class $c$. For an arbitrary measure $\mu$ the existence of eigenfunctions is a delicate question, see e.g.~\cite[Proposition 1.3, Lemma~2.7]{Kokarev}. However, we will apply this definition to the case $\mu = \rho\,d\sigma_g$, where $\rho\in L^\infty(d\sigma_g)$ is a non-negative function. In this case the existence of eigenfunctions easily follows from the trace embedding theorem and we use $\sigma_k(M,g,\rho)$ to denote $\lambda_k(\rho\,d\sigma_g,c)$. Since a constant function is always an eigenfunction corresponding to the zero eigenvalue, one has
$$
\sigma_1(M,g,\rho) = \min\limits R_c(\rho\,d\sigma_g,u),
$$
where $u$ ranges over all non-zero functions in the Sobolev space $H^1(M,g)$ satisfying $\int\limits_{\partial M}u\rho\,d\sigma_g = 0$.

The eigenfunctions corresponding to the eigenvalue $\sigma$ are precisely solutions to~\eqref{rhoSteklov} with non-negative $\rho\in L^\infty(\partial M)$. Classical elliptic regularity estimates for the operator $\mD_g$ yield that the eigenfunctions are at least $\frac{1}{2}$-H\"older regular on the boundary, i.e. they lie in $C^{\frac{1}{2}}(\partial M)$, see Remark~\ref{Continuity} below.

\subsection{Conical singularities}
\label{prelim3}
The construction provided in the sketch of the proof of Theorem~\ref{MainTheorem}(see Section~\ref{MainResults}) yields a smooth metric with two conical singularities of angle $4\pi$ on the boundary. For more detailed review of such metrics see e.g.~\cite{NP}. For our purposes we view a metric $h$ with conical singularities of angle $2\pi k$, $k\in\mathbb{N}$ via an equality $h = f^2 h_0$, where $h_0$ is a regular smooth Riemannian metric and $f\geqslant 0$ is a smooth function that vanishes exactly at singular points. The order of vanishing at $p\in M$ equals $k-1$ iff the corresponding conical angle at $p$ equals $2\pi k$.

A smooth metric $h$ with conical singularities induces well-defined measures $dv_h = f^2dv_{h_0}$, $d\sigma_h = fd\sigma_{h_0}$ and the Dirichlet integral with respect to the metric $h$ in the numerator of~\eqref{Rayleigh} is defined as an improper integral. The latter is known to coincide with the Dirichlet integral with respect to the metric $h_0$, see e.g.~\cite[Example 1.1]{Kokarev}. As a result, the definition of Section~\ref{prelim2} $\sigma_k(M,h,\rho) = \lambda_k(\rho\,d\sigma_h,[h])$ can still be applied for such metrics $h$.
Moreover, the previous discussion yields the equality
\begin{equation}
\label{conformal2}
\bar\sigma_k(M,h,\rho)  = \bar\sigma_k(M,h_0,f\rho),
\end{equation}
that can be seen as an extension of~\eqref{conformal} to the case when the function $f$ can have isolated zeroes.

\section{Convergence of the first eigenvalues}
\label{sectionconvergence}
It turns out that allowing non-smooth densities does not affect the value of $\sigma^*(\gamma,k)$. In order to see that we prove the following lemma.

\begin{lemma}
\label{convergence}
Let $\rho_\varepsilon\in L^\infty(d\sigma_g)$ be a uniformly bounded family of non-negative functions, such that $||\rho_\varepsilon||_{L^1(d\sigma_g)}=1$ and $\rho_\varepsilon\to \rho$ in $L^1(d\sigma_g)$. Then 
$$
\lim\limits_{\varepsilon\to 0} \sigma_1(M,g,\rho_\varepsilon) = \sigma_1(M,g,\rho).
$$
\end{lemma}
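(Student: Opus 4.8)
\textbf{Proof plan.} The plan is to establish the two one-sided bounds $\limsup_{\varepsilon\to0}\sigma_1(M,g,\rho_\varepsilon)\leqslant\sigma_1(M,g,\rho)$ and $\liminf_{\varepsilon\to0}\sigma_1(M,g,\rho_\varepsilon)\geqslant\sigma_1(M,g,\rho)$ separately. Throughout, $g$ is fixed and every Dirichlet integral is taken with respect to $g$; note that $\|\rho\|_{L^1(d\sigma_g)}=1$ by the assumed $L^1$-convergence, so all the measures $\rho_\varepsilon\,d\sigma_g$, $\rho\,d\sigma_g$ are non-zero, and since $\rho_\varepsilon,\rho\in L^\infty(d\sigma_g)$ while the trace embedding $H^1(M,g)\hookrightarrow L^2(\partial M)$ is compact, the relevant first eigenfunctions exist and the variational characterization of Section~\ref{prelim2} applies to all of them.

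For the upper bound I would take a first eigenfunction $u$ for $\rho$, so that $\int_{\partial M}u\rho\,d\sigma_g=0$ and $\sigma_1(M,g,\rho)=\int_M|\nabla u|_g^2\,dv_g\big/\int_{\partial M}u^2\rho\,d\sigma_g$, the denominator being positive. By the regularity recalled in Section~\ref{prelim2}, $u$ is bounded on $\partial M$ (alternatively one replaces $u$ by a smooth approximation and bypasses boundary regularity). Since $\rho_\varepsilon\to\rho$ in $L^1$, boundedness of $u$ and of $u^2$ gives $c_\varepsilon:=\int_{\partial M}u\rho_\varepsilon\,d\sigma_g\to0$ and $\int_{\partial M}u^2\rho_\varepsilon\,d\sigma_g\to\int_{\partial M}u^2\rho\,d\sigma_g$. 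Then $v_\varepsilon:=u-c_\varepsilon$ is an admissible test function for $\rho_\varepsilon$ with $\nabla v_\varepsilon=\nabla u$ and $\int_{\partial M}v_\varepsilon^2\rho_\varepsilon\,d\sigma_g=\int_{\partial M}u^2\rho_\varepsilon\,d\sigma_g-c_\varepsilon^2\to\int_{\partial M}u^2\rho\,d\sigma_g>0$, so inserting $v_\varepsilon$ into the Rayleigh quotient for $\rho_\varepsilon$ yields $\limsup_{\varepsilon\to0}\sigma_1(M,g,\rho_\varepsilon)\leqslant\sigma_1(M,g,\rho)$.

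For the lower bound I would pass to a subsequence realizing the liminf and take corresponding first eigenfunctions $u_\varepsilon$, normalized by $\int_{\partial M}u_\varepsilon^2\rho_\varepsilon\,d\sigma_g=1$ and $\int_{\partial M}u_\varepsilon\rho_\varepsilon\,d\sigma_g=0$, so that $\int_M|\nabla u_\varepsilon|_g^2\,dv_g=\sigma_1(M,g,\rho_\varepsilon)$ stays bounded by the previous step. The first task is a uniform $H^1(M,g)$-bound on $u_\varepsilon$: with $a_\varepsilon$ the mean value of $u_\varepsilon$ over $(M,g)$, the Poincar\'e inequality bounds $\|u_\varepsilon-a_\varepsilon\|_{H^1}$, the trace theorem then bounds $\|u_\varepsilon-a_\varepsilon\|_{L^2(\partial M)}$, and expanding $1=\int_{\partial M}u_\varepsilon^2\rho_\varepsilon\,d\sigma_g$ using $\int_{\partial M}u_\varepsilon\rho_\varepsilon\,d\sigma_g=0$ and the uniform $L^\infty$-bound on $\rho_\varepsilon$ bounds $a_\varepsilon^2$. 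With this bound, extract a further subsequence along which $u_\varepsilon\rightharpoonup u$ weakly in $H^1(M,g)$, $u_\varepsilon\to u$ strongly in $L^2(M)$ and in $L^2(\partial M)$ (Rellich and the compact trace embedding), and $\rho_\varepsilon\to\rho$ almost everywhere on $\partial M$. Weak lower semicontinuity of the Dirichlet energy gives $\int_M|\nabla u|_g^2\,dv_g\leqslant\liminf_{\varepsilon\to0}\sigma_1(M,g,\rho_\varepsilon)$, and the variational characterization will then yield the desired bound provided $u$ is admissible for $\rho$ with $\int_{\partial M}u^2\rho\,d\sigma_g=1$; verifying these two properties of $u$ is the remaining --- and most delicate --- point.

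Indeed, the step I expect to be the main obstacle is precisely the passage to the limit in the boundary integrals, where $\rho_\varepsilon$ converges only weakly-$\ast$ in $L^\infty$ while the relevant traces lie only in $L^2$ or $L^1$. Splitting $\int_{\partial M}u_\varepsilon\rho_\varepsilon-\int_{\partial M}u\rho=\int_{\partial M}(u_\varepsilon-u)\rho_\varepsilon+\int_{\partial M}u(\rho_\varepsilon-\rho)$ (all boundary integrals with respect to $d\sigma_g$), the first term is bounded by $\|\rho_\varepsilon\|_{L^\infty}\|u_\varepsilon-u\|_{L^2(\partial M)}$ and the second by $\|u\|_{L^2(\partial M)}\|\rho_\varepsilon-\rho\|_{L^2(\partial M)}$, which tends to $0$ since $\rho_\varepsilon-\rho$ is bounded in $L^\infty$ and converges to $0$ in $L^1$, hence in $L^2$; thus $\int_{\partial M}u\rho\,d\sigma_g=0$. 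For the normalization, $\int_{\partial M}u_\varepsilon^2\rho_\varepsilon-\int_{\partial M}u^2\rho=\int_{\partial M}(u_\varepsilon^2-u^2)\rho_\varepsilon+\int_{\partial M}u^2(\rho_\varepsilon-\rho)$; the first term is controlled via $\|u_\varepsilon-u\|_{L^2(\partial M)}\to0$ and boundedness of $\|u_\varepsilon+u\|_{L^2(\partial M)}$, while for the second the merely $L^1$ function $u^2$ is paired against $\rho_\varepsilon-\rho$, and here the dominated convergence theorem applies along the chosen subsequence since $|u^2(\rho_\varepsilon-\rho)|\leqslant\big(\sup_\varepsilon\|\rho_\varepsilon\|_{L^\infty}+\|\rho\|_{L^\infty}\big)u^2\in L^1(\partial M)$ and $\rho_\varepsilon\to\rho$ a.e. Hence $\int_{\partial M}u^2\rho\,d\sigma_g=1$, so $u\neq0$, and combining with the Dirichlet-energy bound gives $\sigma_1(M,g,\rho)\leqslant\int_M|\nabla u|_g^2\,dv_g\leqslant\liminf_{\varepsilon\to0}\sigma_1(M,g,\rho_\varepsilon)$. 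It is the compactness of the trace embedding that prevents the normalizing mass from escaping in the limit; a weaker form of compactness would not suffice.
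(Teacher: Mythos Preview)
Your proof is correct and, while it shares the same two one-sided bounds and the same test-function idea for $\limsup$, it handles the $\liminf$ inequality by a genuinely different route than the paper.

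For the uniform $H^1$-bound on the eigenfunctions, the paper invokes a generalized Poincar\'e inequality from Adams--Hedberg (applied to the linear functionals $u\mapsto\int_{\partial M}u\rho_\varepsilon\,d\sigma_g$), whereas you use the ordinary Poincar\'e inequality with the volume mean $a_\varepsilon$ and then bound $a_\varepsilon$ algebraically from the two constraints. More substantially, for the $\liminf$ direction the paper inserts the $f_\varepsilon$ themselves as test functions for $\sigma_1(M,g,\rho)$; to control the denominator $\int f_\varepsilon^2\rho\,d\sigma_g$ it needs a \emph{uniform $L^\infty$-bound} on the $f_\varepsilon$, obtained via elliptic regularity for the Dirichlet-to-Neumann $\Psi$DO together with the one-dimensional Sobolev embedding. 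You avoid that entirely: you extract a weak $H^1$-limit $u$, use the compact trace embedding to get strong $L^2(\partial M)$ convergence of $u_\varepsilon$, upgrade $\rho_\varepsilon\to\rho$ from $L^1$ to $L^2$ by interpolation against the uniform $L^\infty$-bound, and handle the residual term $\int u^2(\rho_\varepsilon-\rho)$ by dominated convergence along an a.e.-convergent subsequence. This is more elementary---it uses only standard Sobolev tools and no pseudodifferential regularity---and would transfer to settings where $L^\infty$-control of eigenfunctions is unavailable. The paper's route, by contrast, is slightly more direct once the $L^\infty$-bound is in hand (no weak limits needed) and yields as a by-product the $C^{1/2}$-regularity statement recorded in Remark~\ref{Continuity}.
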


To motivate the statement let us deduce several corollaries of Lemma~\ref{convergence}.

\begin{proposition}
\label{Linfty}
One has 
$$
\sigma^*(\gamma,k) = \sup
\sigma_1(M,g,\rho)L_{g,\rho}(\partial M),
$$
where the supremum is taken over all smooth metrics $g$ and all non-negative densities $\rho\in L^\infty(\partial M)\backslash\{0\}$.
\end{proposition}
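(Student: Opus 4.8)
The plan is to prove the two inequalities separately. The easy direction is $\sigma^*(\gamma,k)\leqslant\sup\sigma_1(M,g,\rho)L_{g,\rho}(\partial M)$: indeed every smooth metric $g$ on $\Sigma_{\gamma,k}$ gives an admissible pair $(g,\rho\equiv 1)$, and $\bar\sigma_1(\Sigma_{\gamma,k},g)=\sigma_1(\Sigma_{\gamma,k},g,1)L_{g,1}(\partial M)$, so taking the supremum over $g$ yields the bound. For the reverse inequality I would fix a smooth metric $g$ and a non-negative $\rho\in L^\infty(\partial M)\setminus\{0\}$; after rescaling $\rho$ we may assume $\|\rho\|_{L^1(d\sigma_g)}=L_{g,\rho}(\partial M)=1$, so it suffices to show $\sigma_1(M,g,\rho)\leqslant\sigma^*(\gamma,k)$. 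The idea is to approximate $\rho$ by positive \emph{smooth} densities $\rho_\varepsilon$: since $\rho\in L^\infty$, one can take for instance $\rho_\varepsilon=\phi_\varepsilon*\rho+\varepsilon$ (mollification along $\partial M$, which is a disjoint union of circles) and then renormalize in $L^1$, obtaining a uniformly bounded family with $\|\rho_\varepsilon\|_{L^1(d\sigma_g)}=1$ and $\rho_\varepsilon\to\rho$ in $L^1(d\sigma_g)$. By Lemma~\ref{convergence}, $\sigma_1(M,g,\rho_\varepsilon)\to\sigma_1(M,g,\rho)$.

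Now for each $\varepsilon$ the density $\rho_\varepsilon$ is positive and smooth, so by the conformal-factor interpretation recalled in Section~\ref{prelim1} (equation~\eqref{conformal} with $f=\rho_\varepsilon$, or directly $\mD_{g,\rho_\varepsilon}=\mD_{\tilde\rho_\varepsilon^2 g}$ for a positive smooth extension $\tilde\rho_\varepsilon$), we have $\bar\sigma_1(M,g,\rho_\varepsilon)=\bar\sigma_1(M,\tilde\rho_\varepsilon^2 g)\leqslant\sigma^*(\gamma,k)$, the metric $\tilde\rho_\varepsilon^2 g$ being a genuine smooth metric on $\Sigma_{\gamma,k}$. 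Since $L_{g,\rho_\varepsilon}(\partial M)=1$ by construction, this says exactly $\sigma_1(M,g,\rho_\varepsilon)\leqslant\sigma^*(\gamma,k)$. Passing to the limit $\varepsilon\to 0$ gives $\sigma_1(M,g,\rho)\leqslant\sigma^*(\gamma,k)$, hence $\sigma_1(M,g,\rho)L_{g,\rho}(\partial M)\leqslant\sigma^*(\gamma,k)$ before renormalization as well, by scale-invariance of the normalized eigenvalue. Taking the supremum over all $(g,\rho)$ completes the reverse inequality.

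The only genuine input is Lemma~\ref{convergence}; everything else is bookkeeping with the conformal invariance already set up in Section~\ref{prelim1} and an elementary mollification argument. The one point that needs a little care is checking that the mollified-and-renormalized family $\rho_\varepsilon$ really satisfies the hypotheses of Lemma~\ref{convergence}: uniform $L^\infty$ bound follows from $\|\phi_\varepsilon*\rho\|_{L^\infty}\leqslant\|\rho\|_{L^\infty}$ and the fact that the renormalization constant $\|\phi_\varepsilon*\rho+\varepsilon\|_{L^1(d\sigma_g)}^{-1}\to 1$, while $L^1$ convergence is the standard property of mollifiers on $L^1$ together with $\varepsilon L_g(\partial M)\to 0$. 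I would also remark that the same argument shows the supremum is unchanged if one further restricts to \emph{continuous} positive densities, which is occasionally convenient; but for the statement as written, smooth metrics suffice.
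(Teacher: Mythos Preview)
Your proof is correct and follows essentially the same approach as the paper: both reduce the statement to approximating a given $\rho\in L^\infty$ by smooth positive densities in $L^1$ and then invoking Lemma~\ref{convergence} together with the conformal-factor interpretation of smooth positive densities. The only cosmetic difference is the choice of regularization---the paper uses the heat kernel on $\partial M$ rather than mollification plus $\varepsilon$---and your write-up is in fact more explicit about the two inequalities and the verification of the hypotheses of Lemma~\ref{convergence}.
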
 
\begin{proof}
By Lemma~\ref{convergence} it suffices to approximate any non-negative $\rho\in L^\infty$ by positive smooth functions in the $L^1(d\sigma_g)$-norm. It can be easily done by making a canonical regularization procedure using the heat kernel $K_t(x,y)$ of the boundary, see~\cite{FS1,Petrides}. We set 
$$
\rho_\varepsilon (y) = \int\limits_{\partial M} K_\varepsilon(x,y)\rho\,d\sigma_g(x).
$$ 
As $K_\varepsilon(x,y)$ is a smooth positive function, one has $\rho_\varepsilon$ is smooth and positive. Moreover, $K_\varepsilon(x,y) \to \delta_{\{x=y\}}$, therefore $\rho_\varepsilon\to\rho$ as $\varepsilon\to 0$.
\end{proof}

\begin{corollary}
\label{conical}
Suppose that $g$ is a smooth metric with isolated conical singularities and $\rho\in L^\infty(\partial M)$ is a positive density. Then there exists a sequence of smooth metrics $g_i$ such that
$$
\lim\limits_{i\to\infty}\sigma_1(M,g_i)L_{g_i}(\partial M)\to \sigma_1(M,g,\rho) L_{g,\rho} (\partial M).
$$
Moreover, metrics $g_i$ can be chosen conformal to $g$.
\end{corollary}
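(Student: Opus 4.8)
The plan is to reduce the statement to the mollification argument already used in the proof of Proposition~\ref{Linfty}, combined with the conformal identity~\eqref{conformal2}. Write $g = f^2 g_0$ as in Section~\ref{prelim3}, where $g_0$ is a smooth regular metric on $M$ and $f\geqslant 0$ is a smooth function vanishing exactly at the conical points. By~\eqref{conformal2} one has $\bar\sigma_1(M,g,\rho) = \bar\sigma_1(M,g_0,f\rho)$, so it is enough to produce smooth metrics conformal to $g_0$ whose normalized first eigenvalue converges to $\bar\sigma_1(M,g_0,f\rho)$. Note that $f\rho\in L^\infty(\partial M)$ is non-negative, and since $f$ vanishes only at isolated points it is not identically zero; in fact $\|f\rho\|_{L^1(d\sigma_{g_0})} = \int_{\partial M}\rho\,d\sigma_g = L_{g,\rho}(\partial M) > 0$.

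Next I would regularize $f\rho$ exactly as in the proof of Proposition~\ref{Linfty}, setting $\rho_i(y) = \int_{\partial M} K_{1/i}(x,y)\,(f\rho)(x)\,d\sigma_{g_0}(x)$, where $K_t$ is the heat kernel of $(\partial M,g_0)$. Then each $\rho_i$ is smooth and positive, the family is uniformly bounded with $\|\rho_i\|_{L^\infty}\leqslant\|f\rho\|_{L^\infty}$ (since $K_t(\cdot,y)$ integrates to one), and $\rho_i\to f\rho$ in $L^1(d\sigma_{g_0})$. Since $\bar\sigma_1(M,g_0,\cdot)$ is unchanged under multiplying the density by a positive constant, I would normalize: because $\|\rho_i\|_{L^1}\to L_{g,\rho}(\partial M)>0$, the rescaled densities $\rho_i/\|\rho_i\|_{L^1}$ form a uniformly bounded family of $L^1$-unit densities converging in $L^1(d\sigma_{g_0})$ to $f\rho/\|f\rho\|_{L^1}$. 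Lemma~\ref{convergence} then yields $\bar\sigma_1(M,g_0,\rho_i)\to\bar\sigma_1(M,g_0,f\rho)$.

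Finally I would set $g_i := \rho_i^2 g_0$. These are genuine smooth metrics, conformal to $g_0$ and hence to $g$, and applying~\eqref{conformal} with density $\equiv 1$ gives $\bar\sigma_1(M,g_i) = \bar\sigma_1(M,g_0,\rho_i)$. Stringing together the three relations produces
$$
\sigma_1(M,g_i)L_{g_i}(\partial M) = \bar\sigma_1(M,g_i) \longrightarrow \bar\sigma_1(M,g_0,f\rho) = \bar\sigma_1(M,g,\rho) = \sigma_1(M,g,\rho)L_{g,\rho}(\partial M),
$$
which is the claim, with the $g_i$ conformal to $g$ as asserted.

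I do not expect a serious obstacle here: the content lies entirely in Lemma~\ref{convergence} and the conformal identity~\eqref{conformal2}, and the rest is bookkeeping. The only points needing a little care are the uniform $L^\infty$ bound on the mollifications $\rho_i$ and the reduction of the $L^1$-normalization hypothesis of Lemma~\ref{convergence} to the scale invariance of $\bar\sigma_1$ in the density; both are handled in the second paragraph above.
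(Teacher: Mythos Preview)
Your proof is correct and is precisely the argument the paper intends: it cites equality~\eqref{conformal2} and Proposition~\ref{Linfty}, and you have simply unpacked the heat-kernel mollification from the latter's proof together with Lemma~\ref{convergence}. The only cosmetic point is that when you write $g_i := \rho_i^2 g_0$ you should extend $\rho_i$ smoothly and positively from $\partial M$ to all of $M$ (as in Section~\ref{prelim1}); this does not affect $\bar\sigma_1$.
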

\begin{proof}
The statement follows from equality~\eqref{conformal2} and Proposition~\ref{Linfty}.
\end{proof}

\begin{proof}[Proof of Lemma~\ref{convergence}]
Let $\sigma_{\varepsilon} = \sigma_1(M,g,\rho_\varepsilon)$ and let $f_\varepsilon$ be a corresponding Steklov eigenfunction normalized so that $||f_\varepsilon||_{L^2(\rho_\varepsilon d\sigma_g)} = 1$. Similarly, let $\sigma = \sigma_1(M,g,\rho)$ and $f$ is a corresponding eigenfunction
such that $||f||_{L^2(\rho d\sigma_g)} = 1$. For the remaining of the proof we adapt the convention that $C$ denotes all the constants depending on $M$ and $g$ but not on $\varepsilon$.

{\bf Claim 1.}
There exists $\varepsilon_0>0$ such that the eigenvalues $\sigma_\varepsilon$ are uniformly bounded for $\varepsilon<\varepsilon_0$.

 We use $f$ as a test function for $\sigma_\varepsilon$. In order to do that we need to modify it so that it is orthogonal to constants in $L^2(\rho_\varepsilon d\sigma_g)$, i.e. we use 
 $$
 f - \int\limits_{\partial M}f\rho_\varepsilon\,d\sigma_g
 $$
 as a test function. One obtains 
 \begin{equation}
 \label{sigmaeps}
 \sigma_\varepsilon \leqslant\cfrac{\displaystyle\int\limits_M|\nabla f|^2\,dV_g}{\displaystyle\int\limits_{\partial M}f^2\rho_\varepsilon\,d\sigma_g - \left(\displaystyle\int\limits_{\partial M} f\rho_\varepsilon\,d\sigma_g\right)^2}.
 \end{equation}
 As $f$ is bounded (see Remark~\ref{Continuity} below), the conditions of the lemma imply 
 that $f^2\rho_\varepsilon\to f^2\rho$ and $f\rho_\varepsilon\to f\rho$ in $L^1(d\sigma_g)$. Since one has $\int\limits_{\partial M}f\rho\,d\sigma_g = 0$, taking the limit in~\eqref{sigmaeps} yields
 \begin{equation}
 \label{limit}
 \limsup\limits_{\varepsilon\to 0}\sigma_\varepsilon\leqslant \sigma. 
 \end{equation}
 The claim follows.
 
{\bf Claim 2.} There exists $\varepsilon_0>0$ such that for all $\varepsilon<\varepsilon_0$ the norms $||f_\varepsilon||_{L^2(d\sigma_g)}$ and $||{f_\varepsilon}||_{H^1(dv_g)}$ are uniformly bounded.
 
In order to prove this claim let us recall the following generalized Poincar\'e inequality.
\begin{theorem}[\cite{AH} Lemma 8.3.1] 
\label{Poincare}
Let $(M,g)$ be a Riemannian manifold. Then there exists a constant $C>0$ such that for all $L\in H^{-1}(M)$ with $L(1) = 1$ one has 
\begin{equation}
||u- L(u)||_{L^2(M)} \leqslant C||L||_{H^{-1}(M)} \left(\,\int\limits_M |\nabla u|^2_g\,dv_g\right)^{1/2}
\end{equation}
for all $u\in H^1(M)$.
\end{theorem}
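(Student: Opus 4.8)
The plan is to deduce the inequality from the classical Poincar\'e--Wirtinger inequality on the compact manifold $M$. Write $\bar u = \mathrm{Vol}_g(M)^{-1}\int_M u\,dv_g$ for the mean value of $u$. Since $\bar u$ is a constant function and $L(1) = 1$, one has $L(\bar u) = \bar u$, and therefore
\begin{equation*}
u - L(u) = (u - \bar u) - L(u - \bar u),
\end{equation*}
where the second term on the right is a constant. Taking $L^2(M)$-norms and using $\|1\|_{L^2(M)} = \mathrm{Vol}_g(M)^{1/2}$ gives
\begin{equation*}
\|u - L(u)\|_{L^2(M)} \leqslant \|u - \bar u\|_{L^2(M)} + \mathrm{Vol}_g(M)^{1/2}\,|L(u - \bar u)|.
\end{equation*}

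For the first summand I would use the Poincar\'e--Wirtinger inequality on $(M,g)$: there is a constant $C_0 = C_0(M,g)$ with $\|u - \bar u\|_{L^2(M)} \leqslant C_0\|\nabla u\|_{L^2(M)}$, whence also $\|u - \bar u\|_{H^1(M)} \leqslant (1 + C_0^2)^{1/2}\|\nabla u\|_{L^2(M)}$, since $\nabla(u - \bar u) = \nabla u$. For the second summand, the definition of the dual norm yields $|L(u - \bar u)| \leqslant \|L\|_{H^{-1}(M)}\|u - \bar u\|_{H^1(M)} \leqslant (1 + C_0^2)^{1/2}\|L\|_{H^{-1}(M)}\|\nabla u\|_{L^2(M)}$.

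The only genuine point is to insert a factor $\|L\|_{H^{-1}(M)}$ in front of the first summand as well, which Poincar\'e--Wirtinger does not provide on its own. This is where the hypothesis $L(1) = 1$ is needed: it gives $1 = |L(1)| \leqslant \|L\|_{H^{-1}(M)}\|1\|_{H^1(M)} = \mathrm{Vol}_g(M)^{1/2}\|L\|_{H^{-1}(M)}$, so $\|L\|_{H^{-1}(M)} \geqslant \mathrm{Vol}_g(M)^{-1/2}$, and hence $\|u - \bar u\|_{L^2(M)} \leqslant C_0\,\mathrm{Vol}_g(M)^{1/2}\|L\|_{H^{-1}(M)}\|\nabla u\|_{L^2(M)}$. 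Substituting both estimates into the displayed bound proves the inequality with $C = \mathrm{Vol}_g(M)^{1/2}\bigl(C_0 + (1 + C_0^2)^{1/2}\bigr)$. The only place where the geometry of $M$ intervenes is the Poincar\'e--Wirtinger step, which requires $M$ to be compact and connected; since throughout the paper $M$ is a compact connected surface with boundary, nothing further needs to be checked. Alternatively one could run a standard argument by contradiction using Rellich's compactness theorem, but the direct route above seems shorter and makes the role of the normalization $L(1) = 1$ transparent.
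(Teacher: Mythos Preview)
The paper does not prove this statement; it is quoted verbatim from Adams--Hedberg~\cite{AH}, Lemma~8.3.1, and used as a black box inside the proof of Lemma~\ref{convergence}. So there is no ``paper's own proof'' to compare against. That said, your argument is correct and is in fact essentially the argument given in~\cite{AH}: reduce to the mean-zero case via $u-L(u)=(u-\bar u)-L(u-\bar u)$, control the first piece by the classical Poincar\'e--Wirtinger inequality, control the second by the dual norm of $L$, and finally absorb the first piece into the form $\|L\|_{H^{-1}}\|\nabla u\|_{L^2}$ using the lower bound $\|L\|_{H^{-1}}\geqslant\|1\|_{H^1}^{-1}$ that comes from $L(1)=1$. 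Your remark that compactness and connectedness of $M$ are needed (and available in the paper's setting) is well placed; the theorem as stated in the paper omits these hypotheses, but they are present in the Adams--Hedberg original.
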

 
 We apply this theorem for $L_\varepsilon(u) = \int\limits_{\partial M} u\rho_\varepsilon\,d\sigma_g$. First, let us compute their norm,
 $$
 \left|\,\int\limits_{\partial M} u\rho_\varepsilon\, d\sigma_g \right|\leqslant C\int\limits_{\partial M}|u|\,d\sigma_g \leqslant C||u||_{L^2(d 
 \sigma_g)}\leqslant C||u||_{H^1(M,g)},
 $$ 
 where we used in order: boundedness of $\rho_\varepsilon$, Cauchy-Schwarz and trace inequalities. Thus, we see that $L_\varepsilon$ is a uniformly bounded family in $H^{-1}(M)$. According to Theorem~\ref{Poincare} applied to $L_\varepsilon$ and the eigenfunction $f_{\varepsilon}$, one has
 $$
 ||{f_\varepsilon}||_{L^2(dv_g)}\leqslant C\left(\,\int\limits_M |\nabla  {f_\varepsilon}|^2\,dv_g\right)^{1/2} = C\sqrt{\sigma_\varepsilon},
 $$
 since $L_\varepsilon( f_\varepsilon) = 0$.
 The right hand side is uniformly bounded by Claim 1. Thus, we obtain that $f_\varepsilon$ are uniformly bounded in $H^1(dv_g)$. Trace embedding theorem implies that $f_\varepsilon$ are uniformly bounded in $L^2(d\sigma_g)$.
 
{\bf Claim 3.}
There exists $\varepsilon_0>0$ such that the functions $f_\varepsilon$ are uniformly bounded for $\varepsilon<\varepsilon_0$.

Let $\mD_g$ denote the Dirichlet-to-Neumann operator associated with the metric $g$. Then
the function $f_\varepsilon$ satisfies $\mD_gf_\varepsilon = \sigma_\varepsilon \rho_\varepsilon f_\varepsilon$.
As $M$ is a manifold with smooth boundary, $\mD_g$ is an elliptic self-adjoint $\Psi$DO of order $1$. Therefore, by elliptic regularity and $1$-dimensional Sobolev embedding theorem, one has
$$
||f_\varepsilon||_{\infty}\leqslant C||f_\varepsilon||_{H^1(d\sigma_g)}\leqslant C(||\sigma_\varepsilon\rho_\varepsilon f_\varepsilon||_{L^2(d\sigma_g)} + ||f_\varepsilon||_{L^2(d\sigma_g)})\leqslant C(\sigma_\varepsilon + 1),
$$
where in the last inequality we used the boundedness of $\rho_\varepsilon$ and Claim 2.
According to Claim 1 the right hand side is uniformly bounded.

\begin{remark}
\label{Continuity}
The same argument applied to a single Steklov eigenfunction corresponding to $\sigma_k(M,g,\rho)$, combined with an embedding $H^1(\partial M)\to C^{\frac{1}{2}}(\partial M)$, shows that all eigenfunctions belong to $C^{\frac{1}{2}}(\partial M)$.
\end{remark}

{\bf Claim 4.} One has 
$$
\int\limits_{\partial M} f^2_\varepsilon \rho\,d\sigma_g \to 1 \qquad \mathrm{and}\qquad \int\limits_{\partial M} f_\varepsilon\rho\, d\sigma_g \to 0
$$
as $\varepsilon \to 0$.

Indeed, since $||f_\varepsilon||_{L^2(\rho_\varepsilon\,d\sigma_g)} = 1$ one has
$$
\left|\,\int\limits_{\partial M} f^2_\varepsilon \rho\,d\sigma_g - 1\right|\leqslant \int\limits_{\partial M} |f_\varepsilon|^2|\rho-\rho_\varepsilon|\,d\sigma_g.
$$
Claim 3 and $L^1$-convergence $\rho_\varepsilon\to\rho$ conclude the proof. The second limit is proved in the same way, since the mean of $f_\varepsilon$ with respect to $\rho_\varepsilon\,d\sigma_g$ is equal to $0$.

To conclude the proof of Lemma~\ref{convergence}, we use $f_\varepsilon$ as test-functions for $\sigma$.  
We modify $f_\varepsilon$ so that it is orthogonal to constants in $L^2(\partial M)$, i.e. we use
$$
f_\varepsilon - \int\limits_{\partial M}f_\varepsilon\rho_\varepsilon\,d\sigma_g
$$
as a test function. Thus, similarly to~\eqref{sigmaeps}, we obtain
\begin{equation*}
 \sigma \leqslant\cfrac{\displaystyle\int\limits_M|\nabla  f_\varepsilon|^2\,dV_g}{\displaystyle\int\limits_{\partial M}f_\varepsilon^2\rho\,d\sigma_g - \left(\displaystyle\int\limits_{\partial M} f_\varepsilon\rho\,d\sigma_g\right)^2},
 \end{equation*}
where the numerator of the fraction on the right hand side equals $\sigma_\varepsilon$ and the denominator tends to $1$ by Claim 4. Therefore, we obtain
$$
\sigma\leqslant\liminf\limits_{\varepsilon\to 0}\sigma_\varepsilon.
$$
In combination with~\eqref{limit} this concludes the proof of the lemma.
\end{proof}


%

\section{Proof of Theorem~\ref{MainTheorem}}

\label{construction}
The construction in this section is reminiscent of the construction in~\cite[Lemma 2.1.2]{GPHPS}, where the authors are gluing together two plane disks.
Let $g$ be a smooth metric on $M = \Sigma_{\gamma,k+1}$ and $\rho>0$ be a smooth function such that $L_{g,\rho}(\partial M) = 1$ and $\sigma_1(M,g,\rho)$ is close to $\sigma^*(\gamma,k+1)$. 
Let $p_1,p_2\in\partial M$ be two points lying on different boundary components of $M$. By conformal invariance we can assume, without loss of generality, that there exist neighbourhoods $U_i$ of $p_i$ such that the restriction $g|_{U_i}$ is a Euclidean metric.
For all sufficiently small $\varepsilon>0$ and $j=1,2$ let $I_{j,\varepsilon}\subset\partial M$ be a segment of the boundary of length $2\varepsilon$ centered at $p_i$, where the length is taken with respect to the metric $g$. Note that there is a unique orientation reversing and distance preserving map $\varphi_\varepsilon\colon I_{1,\varepsilon}\to I_{2,\varepsilon}$ satisfying $\varphi_\varepsilon(p_1) = p_2$.

We define a new surface $M_\varepsilon$ by gluing together $I_{1,\varepsilon}$ and $I_{2,\varepsilon}$ using $\varphi_\varepsilon$, i.e. $M_\varepsilon = M/ (p\sim \varphi_\varepsilon(p))$, see Figure 1. Let $I_\varepsilon\subset M_{\varepsilon}$ denote the common image of $I_{1,\varepsilon}$ and $I_{2,\varepsilon}$ in $M_\varepsilon$. 


\begin{figure}
  \centering
  \def\svgwidth{\columnwidth}
  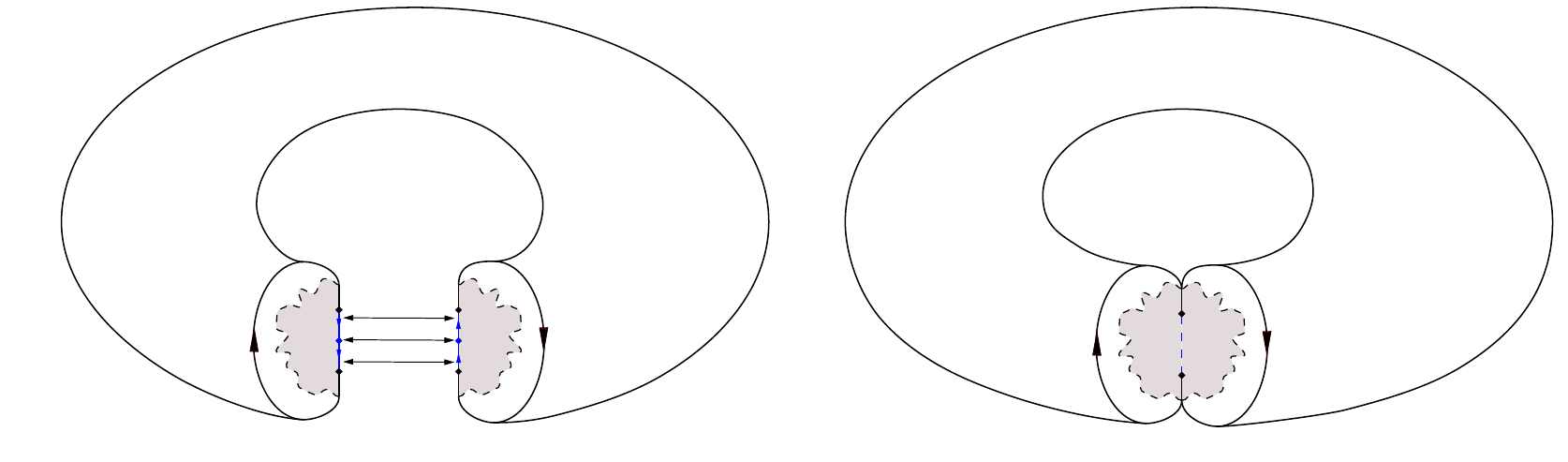
  \begin{minipage}{127mm}
    \footnotesize
    \emph{
    Figure 1: On the left, we see the surface $M$ together with the boundary orientation. The identification $\varphi_\varepsilon$ is denoted by horizontal arrows. On the right, we see $M_\varepsilon$. Note that at the endpoints of $I_\varepsilon$, the surface is locally isometric to a neighbourhood of $0$ in $\mathbb{R}^2\backslash \{(x,0),\,x>0\}$, i.e. the metric $g_\varepsilon$ has conical singularities of angle $4\pi$ at these endpoints.
    }
   \end{minipage}
\end{figure}

Since the metric $g$ is Euclidean in $U_i$, a neighbourhood of either endpoint of $I_\varepsilon$ is locally isometric to a Euclidean angle $2\pi$ that can be represented by $\mathbb{R}^2\backslash \{(x,0),\,x>0\}$. Therefore,
one has the following proposition.

\begin{proposition}
For all $\varepsilon>0$ the surface $M_\varepsilon$ is an orientable surface of genus $\gamma+1$ with $k$ boundary components. The metric $g$ induces a smooth metric $g_\varepsilon$ on $M_\varepsilon$ with two conical singularities of angle $4\pi$ on the boundary located at the endpoints of $I_\varepsilon$.
\end{proposition}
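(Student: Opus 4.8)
The plan is to verify the two assertions---the topological one and the metric one---separately; both reduce to local and combinatorial bookkeeping. For the topology, fix an orientation of $M=\Sigma_{\gamma,k+1}$. The first observation is that $M_\varepsilon$ is a connected compact topological surface with boundary: connectedness and compactness pass to the quotient; an interior point of $I_\varepsilon$ acquires a neighbourhood obtained by gluing two half-disks along a common boundary segment, which is a disk; and an endpoint of $I_\varepsilon$ acquires a neighbourhood obtained by gluing two half-disks along a single boundary ray of each, which is again homeomorphic to a half-disk. Since $\varphi_\varepsilon$ reverses the boundary orientations induced by the chosen orientation of $M$---this is exactly what the arrows in Figure~1 encode---the orientation of $M$ descends to $M_\varepsilon$, so $M_\varepsilon$ is orientable; this is the same mechanism that makes the orientation double of a surface with boundary orientable. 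Next I count boundary components: $p_1$ and $p_2$ lie on distinct boundary circles $C_1,C_2$, the arcs $\alpha_i:=C_i\setminus\operatorname{int}I_{i,\varepsilon}$ have the same endpoints as $I_{i,\varepsilon}$, and the identification glues $\partial\alpha_1$ to $\partial\alpha_2$, so $\alpha_1\cup\alpha_2$ becomes a single boundary circle while the remaining $k-1$ circles are untouched; hence $M_\varepsilon$ has $k$ boundary components. Finally, $M_\varepsilon$ is the pushout of $I_{1,\varepsilon}\sqcup I_{2,\varepsilon}\hookrightarrow M$ along the collapse $I_{1,\varepsilon}\sqcup I_{2,\varepsilon}\to I_\varepsilon$, so $\chi(M_\varepsilon)=\chi(M)+\chi(I_\varepsilon)-\chi(I_{1,\varepsilon}\sqcup I_{2,\varepsilon})=\chi(M)+1-2=\bigl(2-2\gamma-(k+1)\bigr)-1=-2\gamma-k$. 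An orientable compact surface with $k$ boundary components and Euler characteristic $-2\gamma-k$ has genus $\gamma+1$, so by the classification of compact surfaces with boundary $M_\varepsilon\cong\Sigma_{\gamma+1,k}$.

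For the metric part, recall that by conformal invariance $g$ has been arranged to be Euclidean on $U_i$, so for $\varepsilon$ small enough that $I_{i,\varepsilon}\subset U_i$ each $I_{i,\varepsilon}$ is a straight segment in a flat half-disk and $\varphi_\varepsilon$ is an affine isometry of segments. Along the interior of $I_\varepsilon$ the induced metric $g_\varepsilon$ is then flat: a small neighbourhood there is isometric to two flat half-disks glued isometrically along a common straight diameter, hence to a flat disk; and $g_\varepsilon=g$ away from $I_\varepsilon$. Thus $g_\varepsilon$ is a smooth metric on $M_\varepsilon$ minus the two endpoints of $I_\varepsilon$. At such an endpoint $q$: near the corresponding endpoint of $I_{1,\varepsilon}$ the surface $M$ is a flat half-disk whose boundary line is cut by that point into the ray contained in $I_{1,\varepsilon}$ and a complementary free ray, and the same holds near the matching endpoint of $I_{2,\varepsilon}$; the map $\varphi_\varepsilon$ glues the two $I$-rays together isometrically. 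The outcome is a flat wedge of total angle $\pi+\pi=2\pi$ bounded by the two complementary rays, which is precisely a neighbourhood of $0$ in $\mathbb{R}^2\setminus\{(x,0):x>0\}$ with the flat metric. By the discussion in Section~\ref{prelim3} this is recorded as a conical singularity of angle $4\pi$ (equivalently, near $q$ one may write $g_\varepsilon=f^2h_0$ with $h_0$ a smooth metric that is smooth up to the boundary and $f$ vanishing to first order at $q$), which completes the proof.

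I do not expect a genuine obstacle: the proposition is a bookkeeping statement and every step above is elementary. The one point that truly requires care is the orientation convention---one must check that interpreting ``$\varphi_\varepsilon$ orientation reversing'' as ``reverses the induced boundary orientations'' is exactly the condition guaranteeing that $M_\varepsilon$ is orientable (and that Figure~1 is drawn so as to realize it); had the gluing been orientation preserving in that sense, $M_\varepsilon$ would instead be a non-orientable surface with the same Euler characteristic. The only mildly delicate computation is the identification of the local model at the endpoints of $I_\varepsilon$ and the matching of its cone angle with the normalization of conical angles adopted in Section~\ref{prelim3}.
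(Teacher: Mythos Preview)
Your argument is correct and follows the same line as the paper, which treats the proposition as essentially self-evident: the only justification given there is the single sentence identifying the local model at an endpoint of $I_\varepsilon$ with $\mathbb{R}^2\setminus\{(x,0):x>0\}$. You have simply filled in the topological bookkeeping (orientability via the orientation-reversing gluing, boundary count, Euler characteristic and genus) and made the cone-angle identification with the convention of Section~\ref{prelim3} explicit.
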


The smooth function $\rho>0$ on $\partial M$ naturally descends to an $L^\infty$ function $\rho_\varepsilon>0$ on $\partial M_\varepsilon$ smooth away from conical singularities such that $L_{g,\rho_\varepsilon}(\partial M_\varepsilon)\to 1$ as $\varepsilon$ tends to $0$. 

\begin{proposition}
\label{limit2}
One has
$$
\liminf\limits_{\varepsilon\to 0}\sigma_1(M_\varepsilon,g_\varepsilon,\rho_\varepsilon) \geqslant \sigma_1(M,g,\rho).
$$
\end{proposition}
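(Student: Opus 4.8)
The plan is to transplant a first eigenfunction of $M_\varepsilon$ back to $M$ through the gluing map and feed it into the variational characterization of $\sigma_1(M,g,\rho)$ from Section~\ref{prelim2}. Write $\sigma_\varepsilon=\sigma_1(M_\varepsilon,g_\varepsilon,\rho_\varepsilon)$ and let $u_\varepsilon$ be an associated eigenfunction; it exists by the discussion of Sections~\ref{prelim2}--\ref{prelim3} after writing $g_\varepsilon=f_\varepsilon^2(g_0)_\varepsilon$ with $(g_0)_\varepsilon$ smooth and using~\eqref{conformal2}. Normalize $u_\varepsilon$ by $\int_{\partial M_\varepsilon}u_\varepsilon^2\rho_\varepsilon\,d\sigma_{g_\varepsilon}=1$ and $\int_{\partial M_\varepsilon}u_\varepsilon\rho_\varepsilon\,d\sigma_{g_\varepsilon}=0$. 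Let $\pi\colon M\to M_\varepsilon$ be the quotient map identifying $I_{1,\varepsilon}$ with $I_{2,\varepsilon}$; it restricts to an isometry of $M\setminus(I_{1,\varepsilon}\cup I_{2,\varepsilon})$ onto $M_\varepsilon\setminus I_\varepsilon$, and since $I_\varepsilon$ and its preimage are null sets I would first check that $\tilde u_\varepsilon:=u_\varepsilon\circ\pi$ defines an element of $H^1(M,g)$ — it has finite energy and its $H^{1/2}$-traces along $I_{1,\varepsilon}$ and $I_{2,\varepsilon}$ agree under $\varphi_\varepsilon$ by construction (alternatively, $u_\varepsilon$ is continuous, cf.~Remark~\ref{Continuity}). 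Conformal invariance of the Dirichlet integral in dimension two then gives $\int_M|\nabla\tilde u_\varepsilon|^2\,dv_g=\int_{M_\varepsilon}|\nabla u_\varepsilon|^2\,dv_{g_\varepsilon}=\sigma_\varepsilon$; and, writing $I'_\varepsilon=I_{1,\varepsilon}\cup I_{2,\varepsilon}\subset\partial M$ and using that $\rho_\varepsilon$ is the descent of $\rho$ while $\pi$ maps $\partial M\setminus I'_\varepsilon$ isometrically onto $\partial M_\varepsilon$ up to a null set, the normalization turns into
\[
\int_{\partial M\setminus I'_\varepsilon}\tilde u_\varepsilon^2\rho\,d\sigma_g=1,\qquad\int_{\partial M\setminus I'_\varepsilon}\tilde u_\varepsilon\rho\,d\sigma_g=0 .
\]

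Next I would establish a priori bounds. Since $M_\varepsilon$ has genus $\gamma+1$ and $k$ boundary components, Corollary~\ref{conical} (or Proposition~\ref{Linfty}) gives $\sigma_\varepsilon\,L_{g_\varepsilon,\rho_\varepsilon}(\partial M_\varepsilon)\leqslant\sigma^*(\gamma+1,k)$, and as $L_{g_\varepsilon,\rho_\varepsilon}(\partial M_\varepsilon)\to1$ this makes $\sigma_\varepsilon$ bounded for small $\varepsilon$. Then, exactly as in Claim~2 of the proof of Lemma~\ref{convergence}, I would apply the generalized Poincar\'e inequality (Theorem~\ref{Poincare}) to the functionals $L_\varepsilon(u)=L_{g_\varepsilon,\rho_\varepsilon}(\partial M_\varepsilon)^{-1}\int_{\partial M\setminus I'_\varepsilon}u\rho\,d\sigma_g$, which satisfy $L_\varepsilon(1)=1$, are uniformly bounded in $H^{-1}(M)$ by the trace inequality, and annihilate $\tilde u_\varepsilon$; this yields $\|\tilde u_\varepsilon\|_{L^2(M,g)}\leqslant C\sqrt{\sigma_\varepsilon}\leqslant C$, hence a uniform bound for $\tilde u_\varepsilon$ in $H^1(M,g)$. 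By the trace theorem and the one-dimensional Sobolev embedding $H^{1/2}(\partial M)\hookrightarrow L^4(\partial M)$, the boundary traces of $\tilde u_\varepsilon$ are then uniformly bounded in $L^4(\partial M,d\sigma_g)$, so H\"older's inequality together with $L_g(I'_\varepsilon)=4\varepsilon$ and $\rho\in L^\infty$ gives
\[
A_\varepsilon:=\int_{I'_\varepsilon}\tilde u_\varepsilon^2\rho\,d\sigma_g\leqslant C\Big(\int_{I'_\varepsilon}\tilde u_\varepsilon^4\,d\sigma_g\Big)^{1/2}\big(L_g(I'_\varepsilon)\big)^{1/2}\leqslant C\varepsilon^{1/2}\to0 ,
\]
and consequently $C_\varepsilon:=\int_{I'_\varepsilon}\tilde u_\varepsilon\rho\,d\sigma_g$ satisfies $|C_\varepsilon|\leqslant A_\varepsilon^{1/2}L_{g,\rho}(I'_\varepsilon)^{1/2}\to0$.

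Finally I would test: $\tilde u_\varepsilon-C_\varepsilon$ is admissible for $\sigma_1(M,g,\rho)$ because $\int_{\partial M}(\tilde u_\varepsilon-C_\varepsilon)\rho\,d\sigma_g=C_\varepsilon-C_\varepsilon L_{g,\rho}(\partial M)=0$ (recall $L_{g,\rho}(\partial M)=1$). Splitting $\partial M=(\partial M\setminus I'_\varepsilon)\cup I'_\varepsilon$ one has $\int_{\partial M}\tilde u_\varepsilon^2\rho\,d\sigma_g=1+A_\varepsilon$ and $\int_{\partial M}\tilde u_\varepsilon\rho\,d\sigma_g=C_\varepsilon$, so
\[
\sigma_1(M,g,\rho)\leqslant\frac{\displaystyle\int_M|\nabla\tilde u_\varepsilon|^2\,dv_g}{\displaystyle\int_{\partial M}\tilde u_\varepsilon^2\rho\,d\sigma_g-\Big(\int_{\partial M}\tilde u_\varepsilon\rho\,d\sigma_g\Big)^2}=\frac{\sigma_\varepsilon}{1+A_\varepsilon-C_\varepsilon^2}\leqslant\frac{\sigma_\varepsilon}{1-C_\varepsilon^2},
\]
and letting $\varepsilon\to0$ with $C_\varepsilon\to0$ gives $\liminf_{\varepsilon\to0}\sigma_\varepsilon\geqslant\sigma_1(M,g,\rho)$, which is the proposition.

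I expect the main obstacle to be the vanishing $A_\varepsilon\to0$, that is, ruling out concentration of boundary mass of the transplanted eigenfunctions on the two shrinking arcs $I_{1,\varepsilon},I_{2,\varepsilon}$. This rests on the uniform $H^1(M)$-bound for $\tilde u_\varepsilon$ — which in turn depends on the a priori bound $\sigma_\varepsilon\leqslant C$ and on uniform control of the functionals $L_\varepsilon$ in $H^{-1}(M)$ — combined with a subcritical trace embedding to upgrade $L^2$ to $L^{2+\delta}$ on the boundary. If one prefers to avoid the $L^4$-trace estimate, the same conclusion follows from compactness of the trace embedding $H^1(M)\hookrightarrow L^2(\partial M)$: along any subsequence $\tilde u_\varepsilon\to w$ strongly in $L^2(\partial M)$, whence $\int_{I'_\varepsilon}\tilde u_\varepsilon^2\,d\sigma_g\leqslant2\|\tilde u_\varepsilon-w\|_{L^2(\partial M)}^2+2\int_{I'_\varepsilon}w^2\,d\sigma_g\to0$ by absolute continuity of the integral, forcing $A_\varepsilon\to0$.
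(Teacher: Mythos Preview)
Your argument is correct, but it takes a substantially different route from the paper's. The paper's proof is two lines: by Neumann bracketing (cutting $M_\varepsilon$ along the interior arc $I_\varepsilon$ enlarges the form domain and hence lowers the eigenvalue), one has $\sigma_1(M,g,\tilde\rho_\varepsilon)\leqslant\sigma_1(M_\varepsilon,g_\varepsilon,\rho_\varepsilon)$, where $\tilde\rho_\varepsilon$ equals $\rho$ off $I'_\varepsilon$ and $0$ on $I'_\varepsilon$; since $\tilde\rho_\varepsilon\to\rho$ in $L^1$, Lemma~\ref{convergence} gives $\sigma_1(M,g,\tilde\rho_\varepsilon)\to\sigma_1(M,g,\rho)$, and the result follows. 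Thus the paper offloads all analytic work onto Lemma~\ref{convergence}.

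Your approach instead transplants the eigenfunction $u_\varepsilon$ of $M_\varepsilon$ directly to $M$ and tests it against the \emph{limit} density $\rho$, which forces you to control the extra boundary mass $A_\varepsilon$ on the shrinking arcs. To do so you reproduce, in this specific situation, the a~priori estimates from the proof of Lemma~\ref{convergence} (finiteness of $\sigma_\varepsilon$ via $\sigma^*(\gamma+1,k)<\infty$, the generalized Poincar\'e inequality, and a trace argument). This is self-contained and avoids invoking the bracketing principle, at the price of redoing work already packaged in Lemma~\ref{convergence}. One small remark: your justification that $\tilde u_\varepsilon\in H^1(M,g)$ via matching $H^{1/2}$-traces is more than you need --- since $I'_\varepsilon\subset\partial M$ (not interior), the pullback through the interior isometry $\pi|_{\mathrm{int}(M)}$ is automatically $H^1$ with the same Dirichlet energy, no trace-matching condition required.
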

\begin{proof}
A classical bracketing argument, see e.g.~\cite{Davies}, yields that introducing additional Neumann conditions does not increase the spectrum. Let us apply this argument to $I_\varepsilon\subset M_{\varepsilon}$, the common image of $I_{1,\varepsilon}$ and $I_{2,\varepsilon}$ in $M_\varepsilon$. Since Neumann conditions can be viewed as Steklov conditions with density equal to zero, one obtains
\begin{equation}
\label{ineq}
\sigma_1(M,g,\tilde\rho_\varepsilon)\leqslant\sigma_1(M_\varepsilon, g_\varepsilon,\rho_\varepsilon),
\end{equation}
where 
\begin{equation*}
\tilde\rho_\varepsilon = 
\begin{cases}
0 \quad\mathrm{on} &\quad  I_{1,\varepsilon}\cup I_{2,\varepsilon}\\
\rho\quad\mathrm{otherwise}.
\end{cases}
\end{equation*}
As $\tilde\rho_\varepsilon\to\rho$ in $L^1(d\sigma_g)$ an application of Lemma~\ref{convergence} yields that the left hand side of~\eqref{ineq} tends to $\sigma_1(M,g,\rho)$. Therefore, taking $\liminf$ in~\eqref{ineq} concludes the proof.
\end{proof}

As $g$ and $\rho$ can be chosen arbitrarily close to $\sigma^*(\gamma, k+1)$, Proposition~\ref{limit2} and Corollary~\ref{conical} imply Theorem~\ref{MainTheorem}.

%

\subsection*{Acknowledgements} The author is grateful to I. Polterovich, D. Bucur and J. Galkowski for fruitful discussions. 
The author would also like to thank I. Polterovich and V. Medvedev for remarks on the preliminary version of the manuscript.
This research was partially supported by Schulich Fellowship. This work is a part of the author’s PhD thesis at McGill University under the supervision of Dmitry Jakobson and Iosif Polterovich.

\end{document}